\numberwithin{equation}{section}
\begin{document}

{\theoremstyle{theorem}
    \newtheorem{theorem}{\bf Theorem}[section]
    \newtheorem{proposition}[theorem]{\bf Proposition}
    \newtheorem{claim}{\bf Claim}[theorem]
    \newtheorem{lemma}[theorem]{\bf Lemma}
    \newtheorem{corollary}[theorem]{\bf Corollary}
    \newtheorem{acknowledgement}{\bf Acknowledgement}
    \renewcommand{\theacknowledgement}{}
}
{\theoremstyle{remark}
    \newtheorem{remark}[theorem]{\bf Remark}
    \newtheorem{example}[theorem]{\bf Example}
}
{\theoremstyle{definition}
    \newtheorem{definition}[theorem]{\bf Definition}
    \newtheorem{conjecture}[theorem]{\bf Conjecture}
    \newtheorem{question}[theorem]{\bf Question}
    \newtheorem{notation}[theorem]{\bf Notation}
}

\def\C{{\mathcal C}}
\def\H{{\mathcal H}}
\def\NN{{\mathbb N}}
\def\ZZ{{\mathbb Z}}
\def\RR{{\mathbb R}}
\def\a{{\bf a}}
\def\x{{\bf x}}
\def\y{{\bf y}}
\def\z{{\bf z}}
\def\w{{\bf w}}
\def\u{{\bf u}}
\def\v{{\bf v}}
\def\0{{\bf 0}}
\def\1{{\bf 1}}
\def\m{{\mathfrak m}}
\def\height{\operatorname{ht}}

\def\demo{\noindent{\bf Proof. }}
\def\QED{\hfill$\Box$}
\def\qed{\QED}
\def\Ass{\operatorname{Ass}}
\def\Min{\operatorname{Min}}
\def\depth{\operatorname{depth}}
\def\height{\operatorname{height}}
\def\grade{\operatorname{grade}}
\def\mgrade{\operatorname{m-grade}}
\def\rar{\rightarrow}
\def\gin{\operatorname{gin}}

\title{Embedded Associated Primes of Powers of Square-free Monomial Ideals}
\thanks{The first author is partially supported by BOR Grant LEQSF(2007-10)-RD-A-30 and Tulane Research Enhancement Fund.}
 
\author{Huy T\`{a}i H\`{a}}
\address{Tulane University\\
Department of Mathematics\\
6823 St. Charles Avenue\\
New Orleans, LA 70118 }
\email{tai@math.tulane.edu}
\urladdr{http://www.math.tulane.edu/~tai/}

\author{Susan Morey}
\address{Department of Mathematics \\
Texas State University\\
601 University Drive\\ 
San Marcos, TX 78666}
\email{morey@txstate.edu}
\urladdr{http://www.txstate.edu/~sm26/}

\keywords{monomial ideals, edge ideals, clutters, hypergraphs, associated primes, symbolic powers, packing property, max-flow min-cut}  
\subjclass[2000]{13A17, 13F55, 05C65, 90C27}

\begin{abstract}
An ideal $I$ in a Noetherian ring $R$ is normally torsion-free if $\Ass (R/I^t) = \Ass (R/I)$ for all $t \ge 1$. We develop a technique to inductively study normally torsion-free square-free monomial ideals. In particular, we show that if a square-free monomial ideal $I$ is minimally not normally torsion-free then the least power $t$ such that $I^t$ has embedded primes is bigger than $\beta_1$, where $\beta_1$ is the monomial grade of $I$, which is equal to the matching number of the hypergraph $\H(I)$ associated to $I$. 
If in addition $I$ fails to have the packing property, then embedded primes of $I^t$ do occur when $t = \beta_1 +1$. As an application, we investigate how these results relate to a conjecture of Conforti and Cornu\'ejols. 
\end{abstract}

\maketitle


\section{Introduction}

There is a combinatorial realization of a square-free monomial ideal that can be manifested in a variety of ways, depending on the reader's background. For this paper we will primarily use the language of hypergraph theory. We will view square-free monomial ideals as edge ideals of hypergraphs. A {\it hypergraph} $\H$  consists of a finite set of points $V(\H) = \{x_1, \dots, x_d\}$, called {\it vertices}, and a family $E(\H)$ of nonempty subsets of $V(\H)$, called {\it edges}. A graph in the classical sense is a hypergraph in which every edge has cardinality at most two (we consider an isolated vertex as an edge consisting of one vertex). A hypergraph is {\it simple} if there are no non-trivial containments among the edges (i.e., if $E_1$ and $E_2$ are distinct elements in $E(\H)$ then $E_1 \not\subseteq E_2$). For the purposes of this paper, all hypergraphs will be assumed to be simple, and simple hypergraphs are allowed to have isolated vertices. Simple hypergraphs are also known as {\it Sperner families} or {\it clutters}. 

Let $k$ be a field. By identifying the points in $V(\H)$ with the variables in a polynomial ring $R=k[x_1, \dots, x_d]$, the natural one-to-one correspondence between square-free monomial ideals in $R$ and simple hypergraphs on $\{x_1, \dots, x_d\}$ is given by
$$\H \longleftrightarrow I(\H) = \Big\langle x^E = \prod_{x \in E}x ~\Big|~ E \in E(\H) \Big\rangle.$$
The ideal $I(\H)$ is referred to as the {\it edge ideal} of $\H$. The construction of edge ideals of hypergraphs is an extension of edge ideals of graphs (cf. \cite{FH, HVT1, V1}), and has appeared in the literature (cf. \cite{HVT, Trung, HHTZ}). It is the same as the construction of edge ideals of clutters (cf. \cite{HMV}) and similar to the construction of facet ideals of simplicial complexes (cf. \cite{Fa, HVT1}). We shall also use $\H(I)$ to denote the corresponding hypergraph of a square-free monomial ideal $I$.

For any ideal $I$ in a Noetherian ring $R$, Brodmann \cite{brod} showed that the sets of associated primes of powers of $I$, $\Ass (R/I^t)$, stabilize for large $t$. However, little is known about where the stability occurs (for which $t$), or which embedded primes are in the stable set. When $I$ is a square-free monomial ideal, $\Ass (R/I)=\Min (R/I)$, but powers of $I$ could have embedded primes. In this paper we seek to describe those primes and the powers for which they occur. Since associated primes localize, we will focus on determining when the maximal homogeneous ideal occurs as an associated prime of $I^t$ for some $t \ge 1$.

It was shown in \cite{SVV} that a graph $G$ is bipartite if and only if its edge ideal $I(G)$ is {\it normally torsion-free}, or equivalently, if $I(G)^t$ has no embedded primes for any $t \ge 1$. For non-bipartite graphs, a method of describing embedded associated primes based on odd cycles, and a bound on where the stability occurs, were given in \cite{AJ}. In this paper, we study more general square-free monomial ideals, whose generators are not restricted to having degree at most two. Our focus is to investigate embedded associated primes of powers $I^t$ of a square-free monomial ideal $I$ in the case when every proper {\it minor} (see Section \ref{background} for definitions) of $I$ is normally torsion-free. 

In Theorem \ref{sequences} we relate the associated primes of $I^t$ to those of the colon ideal $(I^t : M)$, where $M$ is a product of distinct variables in $R$. In particular, we show that the maximal homogeneous ideal is associated to one of these ideals if and only if it is associated to the other. An immediate application of this result is given in Corollary \ref{notbeforeB}, where we give a sharp lower bound on the power $t$ for which $I^t$ has embedded primes when $I$ is minimally not normally torsion-free (i.e., $I$ is not normally torsion-free but all its proper minors are). Note that such ideals correspond to a well-studied but little understood class of hypergraphs (cf. \cite{Cornuejols, Schrijver} and their references). We show that in this case, $I^t$ has no embedded primes for all $t \le \beta_1$, where $\beta_1$ is the {\it monomial grade} of $I$; that is, the maximum length of a regular sequence of monomials in $I$. If, in addition, $I$ fails the {\it packing property} (see Section \ref{background} for definitions) then we show in Theorem \ref{thm.embeddedpower} that $I^{\beta_1+1}$ must have an embedded prime. As a consequence of our work, a minimal counterexample to the well-known Conforti-Cornu\'ejols conjecture (cf. \cite{CC} or \cite[Conjecture 1.6]{Cornuejols}) cannot be unmixed, and if the conjecture holds then in Remark \ref{rmk.highestpower} we give a sharp bound on the number of powers $N$ for which one must check the equality $I^{(n)} = I^n$ for $n \le N$ to guarantee that $I$ is normally torsion-free. See \cite[Corollary 3.14]{GVV} and \cite[Corollary 1.6]{HHTZ} for an explanation of how the normally torsion-free condition relates to the combinatorial conditions in the Conforti-Cornu\'ejols conjecture, and see \cite[Conjecture 4.18]{GRV}) for an algebraic translation of the conjecture. Note that due to our focus on when the maximal homogeneous ideal is an associated prime of $I^t$, the further translation given in \cite[Conjecture 4.21]{GRV} is of particular interest to us since $\depth (R/I^t) \geq 1$ precisely when the maximal homogeneous ideal is not in $\Ass(R/I^t)$.

The techniques used in the paper differ by section. In Section \ref{sequencemethods} we employ mainly algebraic techniques. We prove Theorem \ref{sequences} by using a series of short exact sequences to control the associated primes. In Proposition \ref{powersreduce} we use symbolic powers to relate the colon ideal $(I^t:M)$ to smaller powers $I^s$, for $s < t$. By combining Theorem \ref{sequences} and Proposition \ref{powersreduce}, we are able to employ inductive arguments to gain control over the embedded primes of $I^t$. 

In Section \ref{polarization}, we use a more combinatorial approach. Our method in proving Theorem \ref{thm.embeddedpower} is to use polarization. In particular, we use a result of Faridi (see Lemma \ref{primesPolarize}) which gives a correspondence (which is not necessarily one-to-one) between the associated primes of $I^t$ and the associated primes of its polarization. In Theorem \ref{thm.embeddedpower} we use this correspondence to force the maximal homogeneous ideal to be associated to $I^{\beta_1 +1}$ when proper minors of $I$ are normally torsion-free, but $I$ fails to satisfy the packing property.


\section{Preliminaries}\label{background}

In this section we collect notation and terminology that will be used throughout the paper. Note that we will consistently use the term hypergraph to mean a simple hypergraph.

Our primary focus will be the minimal primes,
associated primes and symbolic powers of ideals. A prime $P$ is
{\it minimal} over an ideal $I$ if $I\subseteq  P$ and there does
not exist a prime $Q \not= P$ with $I \subseteq Q \subsetneq P$. The set of minimal primes over $I$ is denoted by $\Min (R/I)$. Since $I$ is a
monomial ideal, all minimal primes are monomial prime ideals, that is, they are
generated by subsets of the variables. 
A prime $P$ is an {\it associated prime} of $I$ if there exists an element $c$ in $R$
such that $P=(I:c)$. Note that all minimal primes are also associated
primes. We say that an ideal $I$ is {\it unmixed} if all of its associated primes have the same height. 

An ideal $I$ has a primary decomposition 
$$I=q_1 \cap \ldots \cap q_t \cap Q_1 \cap \ldots \cap Q_s$$
were $q_i$ and $Q_j$ are primary ideals with $\sqrt{q_i}$ the minimal
primes of $I$. The primes $\sqrt{Q_j}$ are the {\it embedded}
associated primes of $I$. The set of associated primes of an ideal $I \subseteq R$ is denoted by $\Ass(R/I)$. An ideal $I$ is said to be {\it normally torsion-free} if $\Ass(R/I^t)=\Ass(R/I)$ for all $t \geq 1$.

\begin{definition} The $t$-th {\it symbolic power} of an ideal $I$,
denoted by $I^{(t)}$, is the intersection of the primary components of
$I^t$ that correspond to minimal primes of $I$. 
\end{definition}

Notice that if $I$ is a square-free monomial ideal then $\Ass(R/I) = \Min(R/I)$; that is, $I$ has no embedded primes. Thus, in this case, $I$ is normally torsion-free if and only if $I^t=I^{(t)}$ for all $t \ge 1$.  

An important fact that we shall use is that localization
preserves associated primes. That is, if $P$ is a prime ideal
containing an ideal $I$ then $P \in \Ass (R/I^t)$ if and only if $PR_P \in \Ass
(R_P/(IR_P)^t)$. This allows us to reduce our problem to investigating when the maximal ideal $\m = (x_1, \dots, x_d)$ is an embedded associated prime of $I^t$.  

Although the focus of the paper is square-free monomial ideals, due to the correspondence between such ideals and hypergraphs, there are some invariants and terminology that come from discrete mathematics that will be useful. See, for example, \cite{Berge} or \cite{Cornuejols} for more information on hypergraphs. The algebraic translations of the invariants we will use are well-established (cf. \cite{GVV}), but are given here for the convenience of the reader. 

Throughout the paper, $\H$ will denote a hypergraph over $d$ vertices $\{x_1, \dots, x_d\}$ and $R = k[x_1, \dots, x_d]$ will be the
corresponding polynomial ring. A vertex $x \in V(\H)$ is called an
{\it isolated vertex} if $\{x\} \in E(\H)$. By definition, if $x$ is
an isolated vertex of $\H$ then $\{x\}$ is the only edge in $\H$ that
contains $x$. A {\it vertex cover} of $\H$ is a set of
vertices that has nonempty intersection with all of the edges of $\H$. We will
primarily be interested in minimal vertex covers, where minimality is with respect to inclusion. It is easy to see that there is a one-to-one correspondence between minimal vertex covers of $\H$ and minimal primes of $I(\H)$. 
The minimum cardinality of a vertex cover of $\H$ is often denoted by $\alpha_0(\H)$. Since our primary focus is square-free monomial ideals, we will denote this number by $\alpha_0(I)$, or by $\alpha_0$ when the ideal is implied. Note that
by the correspondence between minimal primes and minimal vertex covers, $\alpha_0(I)$ is also the height of $I$. 

We will refer to generators of a square-free
monomial ideal $I$ as being {\it independent} if the 
corresponding edges of the associated hypergraph are pairwise disjoint; that
is, the generators have disjoint support. 
We will denote the maximum cardinality of an independent set in $I$ by $\beta_1(I)$, or by $\beta_1$ when the ideal is implied. This agrees with the matching number $\beta_1(\H)$, which is the maximal cardinality of a matching in $\H=\H(I)$. Notice that a subset 
of the monomial generators of $I$ is independent if and only if it forms a regular sequence. Thus $\beta_1(I)$ is equal to the {\it monomial grade}, $\mgrade (I)$, of $I$, where the monomial grade of an ideal $I$ is the maximum length of a
regular sequence of monomials in $I$. Clearly, $\alpha_0(I) \ge \beta_1(I)$. 

\begin{definition}(see \cite[Chapter 2, Section 4]{Berge})
A square-free monomial ideal $I$ is said to satisfy the K\"{o}nig property if $\alpha_0(I)=\beta_1(I)$. 
Thus $I$ satisfies the K\"{o}nig property if and only if $\grade (I)= \height (I) =
\mgrade (I)$. 
\end{definition}

There are two operations commonly used on a hypergraph $\H$ to
produce a new, related, hypergraph on a smaller vertex set. Let $x \in V(\H)$ be a vertex in $\H$.
The {\it deletion} $\H \setminus x$ is formed by
removing $x$ from the vertex set and deleting any edge in $\H$ that
contains $x$. This has the effect of setting $x=0$, or of passing to
the ideal $(I(\H),x)/(x)$ in the quotient ring $R/(x)$. For convenience, we will sometimes view the deletion as its extension in the original polynomial ring. As we are primarily
concerned with relations among the generators, which are unchanged by
this extension, this will allow us to work over the original ring
without causing confusion.
The {\it contraction} $\H / x$ is obtained by 
removing $x$ from the vertex set and removing $x$ from any edge of $\H$ that contains $x$. This process has the effect of setting $x=1$, or of passing to the localization $I(\H)_x$ in $R_x$. Any hypergraph formed by a sequence of deletions and contractions is called a
{\it minor} of $\H$. The edge ideal of a minor of $\H$ is also called
a minor of $I(\H)$. Thus minors of a square-free monomial ideal can be
obtained by taking a sequence of quotients and localizations of the
original ideal. 

\begin{definition}(see \cite[Definition 1.4]{Cornuejols} and \cite[Definition 4.13]{GVV})
A square-free monomial ideal $I$ has the {\it packing} property if $I$ and
all of its minors satisfy the K\"{o}nig property. 
\end{definition}

Note that localizing at 
$P$ is equivalent to passing to a minor of $I$, and thus the packing
property is preserved under localization.


\section{Associated Primes and Unmixed Ideals} \label{sequencemethods}

In this section, we study the set of associated primes of powers of a
square-free monomial ideal $I$ when every proper minor of $I$ is normally
torsion-free. Our primary focus is to determine when $I$ is 
normally torsion-free, that is, when a power $I^t$ has no embedded
primes. We show that $I^t$ does not have any embedded primes for
$t \le \beta_1(I)$. We also show that if, in addition, $I$ is
unmixed and satisfies the K\"{o}nig property, then $I$ is normally
torsion-free.  

Recall that $R = k[x_1, \dots, x_d]$. We shall start with a simple
result (whose proof is elementary and left for the reader).

\begin{lemma}\label{exact}
Let $K$ be an ideal and let $x$ be an element in $R$. Then the
following sequence is exact: 
$$0 \rar R/(K:x) \stackrel{x}{\rar} R/K \rar R/(K,x) \rar 0.$$
\end{lemma}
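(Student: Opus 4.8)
The plan is to verify exactness directly at each of the three nonzero terms, using only the definitions of the colon ideal $(K:x)$ and of the ideal $(K,x) = K + xR$. First I would check that multiplication by $x$ induces a well-defined $R$-module homomorphism $\varphi \colon R/(K:x) \rar R/K$: if $r \in (K:x)$ then $rx \in K$ by definition, so $\varphi$ sends the class of $r$ to $0$ and therefore descends to the quotient $R/(K:x)$. The right-hand map is the canonical surjection $R/K \rar R/(K,x)$ induced by the inclusion $K \subseteq (K,x)$, which is visibly onto; this gives exactness at $R/(K,x)$.

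Next I would establish injectivity of $\varphi$. If the class $r + (K:x)$ lies in $\ker \varphi$, then $rx \in K$, and by the very definition of the colon ideal this means $r \in (K:x)$, so $r + (K:x) = 0$; hence $\ker \varphi = 0$. For exactness at the middle term, observe that $\operatorname{im} \varphi = (xR + K)/K$, while the kernel of the projection $R/K \rar R/(K,x)$ consists of those classes $r + K$ with $r \in (K,x) = K + xR$, i.e.\ it is $(K + xR)/K$. These two submodules of $R/K$ coincide, which is precisely exactness at $R/K$.

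I do not anticipate any genuine obstacle: the argument is purely formal and needs no hypothesis on $R$ beyond commutativity, nor any hypothesis on $K$. The only point requiring a little care is keeping straight that the left-hand map is controlled by the colon ideal $(K:x)$ while the right-hand map is controlled by the sum ideal $(K,x)$; once these definitions are unwound, each of the three verifications is one line. (In the sequel this sequence will be applied with $K = I^t$ and $x$ a variable or a product of variables, so as to compare $\Ass(R/I^t)$ with the associated primes of a colon ideal and of a smaller ideal.)
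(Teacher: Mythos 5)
Your proof is correct and complete. The paper states that the proof of this lemma is ``elementary and left for the reader,'' so there is no argument in the paper to compare against; your direct verification of exactness at each of the three nonzero terms --- well-definedness and injectivity of multiplication by $x$ from the definition of $(K:x)$, surjectivity of the canonical projection, and the identification of $\operatorname{im}\varphi$ with $(K+xR)/K = \ker\bigl(R/K \rar R/(K,x)\bigr)$ --- is exactly the standard argument the authors had in mind.
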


The next few lemmas exhibit the behavior of associated primes of
monomial ideals when passing to subrings or larger rings obtained
by deleting or adding variables. Note that associated primes of monomial
ideals are again monomial, and are generated by subsets of the
variables.  

\begin{lemma}\label{addvariable}
Let $K$ be a monomial ideal in $R$. Let $x$ be an indeterminate of $R$
such that $x$ does not divide any minimal generator of $K$. Then there
is a one-to-one 
correspondence between the sets $\Ass (R/K)$ and $\Ass (R/(K,x))$
given by $P \in \Ass 
(R/K)$ if and only if $Q=(P, x)\in \Ass (R/(K,x))$. 
\end{lemma}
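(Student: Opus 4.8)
The plan is to prove the correspondence by direct manipulation of colon ideals, using the standard fact that for a monomial ideal $J$ one has $\Ass(R/J) = \{(J:m) : m \text{ a monomial with } (J:m) \text{ prime}\}$. Write $S = k[x_1,\dots,\widehat{x},\dots,x_d]$. The two facts I need from the hypothesis are that $(K:x) = K$ (hence $(K:x^a) = K$ for all $a\ge 0$), and that for every monomial $m$ the colon ideal $(K:m)$ is again generated by monomials not involving $x$, i.e.\ is extended from $S$; both are immediate from the fact that $x$ divides no minimal generator of $K$. I will also use the elementary identities, valid for monomial ideals: $((K,x):m) = (K:m) + ((x):m)$, where $((x):m)$ equals $(x)$ if $x \nmid m$ and equals $R$ if $x \mid m$; and $(JR + xR)\cap S = J$ for any ideal $J \subseteq S$.

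For the forward direction, let $P \in \Ass(R/K)$ and write $P = (K:m)$ for a monomial $m$ with $(K:m)$ prime. Dividing out the power of $x$ from $m$ and using $(K:x^a)=K$, we may assume $x\nmid m$; then $P$ is extended from $S$, and $((K,x):m) = (K:m) + (x) = (P,x)$, which is a prime ideal (it is generated by a subset of the variables) and is proper since $m \notin (K,x)$, hence $(P,x)\in \Ass(R/(K,x))$. For the converse, let $Q\in\Ass(R/(K,x))$ and write $Q = ((K,x):m)$ for a monomial $m$ with $Q$ prime; since $Q\ne R$, the case $x\mid m$ is excluded (it would force $((K,x):m)=R$), so $Q = (K:m) + (x)$. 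Because $(K:m)$ is extended from $S$, contracting to $S$ and using $(JR+xR)\cap S = J$ gives $(K:m) = Q\cap S =: P$, which is prime (contraction of a prime) and proper, so $P\in\Ass(R/K)$ and $Q = (P,x)$. Finally the assignments $P\mapsto (P,x)$ and $Q\mapsto Q\cap S$ are mutually inverse (again by the contraction identity), yielding the bijection.

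The argument is essentially bookkeeping with colon ideals of monomial ideals; the hypothesis enters only through $(K:x)=K$ and the stability of ``extended from $S$'' under colons, and the one point that needs care is the reduction to a monomial colon element not divisible by $x$ in each direction. A shorter but less self-contained route would note that $R/K \cong (S/K_0)[x]$ and $R/(K,x)\cong S/K_0$ with $K_0 = K\cap S$, and invoke $\Ass_{A[y]}(A[y]) = \{\mathfrak p A[y] : \mathfrak p\in\Ass_A A\}$; I would keep the colon-ideal proof as the main line to stay within the monomial-ideal framework of the paper, and perhaps mention this alternative in passing.
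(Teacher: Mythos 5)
Your proof is correct and follows essentially the same route as the paper's: both reduce to a monomial colon element $c$ with $x \nmid c$ and then verify the correspondence between $P = (K:c)$ and $Q = ((K,x):c)$ directly. The only cosmetic difference is that you package the element-chase as the identity $((K,x):m) = (K:m) + ((x):m)$ for monomial ideals rather than arguing by double inclusion, and you note the $(S/K_0)[x]$ alternative in passing.
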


\begin{proof} Suppose $P \in \Ass (R/K)$. Then there is a monomial $c
  \in R$ such 
that $P=(K:c)$. Since $x$ does not divide any minimal generator of
$K$, we may choose a $c$ such that $x$ does not divide $c$. Clearly, $(P, x)
\subseteq ((K,x):c)$. To see the other 
inclusion, consider a monomial $f \in ((K,x):c)$. If $x | f$, then $f \in
(P,x)$. If $x$ does not divide $f$, then since $x$ does not divide
$c$, we have that $x$ does not divide $fc$. Since $fc$ is a monomial and $((K,x):c)$ is a monomial ideal, we have $fc \in K$, and so $f \in (K:c)=P$. 

Now suppose that $Q \in \Ass (R/(K,x))$. Since $x \in (K,x)
\subseteq Q$ and $Q$ is generated by a subset of the variables, we can write
$Q=(P, x)$ for some  prime ideal $P$. Let $c\in R$ be a monomial such
that $Q=(P,x)=((K,x):c)$. If $x | c$, then $((K,x):c)= R \not=P$, a 
contradiction. 
Thus, $x$ does not divide $c$. Let $y \in P$ be a minimal generator. Then $x$
does not divide $y$ and $yc \in (K,x)$. Since $(K,x)$ is a
monomial ideal, this implies that $yc \in K$. Therefore, $P \subseteq
(K:c)$. Conversely, let $g \in (K:c)$ be a minimal 
monomial generator of $(K:c)$. Since $x$ does not divide any minimal
generator of $K$, we 
have that $x$ does not divide $g$. It then follows, since $g \in (K:c)
\subseteq ((K,x):c)=Q$ and 
$x$ does not divide $g$, that $g \in P$.
\end{proof}

\begin{lemma}\label{coloninclusion}
Let $K$ be a monomial ideal and let $M$ be a monomial in $R$. Suppose $P \in \Ass (R/(K:M))$. Then $P \in \Ass (R/K)$. 
\end{lemma}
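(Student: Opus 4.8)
The plan is to reduce everything to the elementary colon-ideal identity $((K:M):c) = (K:Mc)$, which holds for any ideal $K$ and any elements $M, c \in R$. Recall that in this paper a prime $P$ is defined to be an associated prime of an ideal $I$ precisely when $P = (I:c)$ for some $c \in R$. So, starting from the hypothesis $P \in \Ass(R/(K:M))$, the first step is simply to invoke this definition to produce an element $c \in R$ with $P = ((K:M):c)$.

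The second step is to apply the colon identity to rewrite $((K:M):c) = (K:Mc)$. This exhibits the given prime $P$ in the form $(K:f)$ with $f = Mc \in R$, which is exactly the condition required for $P \in \Ass(R/K)$, and the proof is complete.

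There is essentially no obstacle here: the entire content is the observation that an iterated colon collapses to a single colon against the product of the two elements. The statement does not use that $K$ is a monomial ideal or that $R$ is Noetherian; if one preferred the classical description of $\Ass$ (annihilators of elements of $R/I$ that happen to be prime), the same two lines go through verbatim, since the colon identity is purely formal. One might only remark, for consistency with the surrounding discussion, that when $K$ is monomial and $M$ is a monomial the element $c$ realizing $P$ can be taken to be a monomial, so that $f = Mc$ is again a monomial; but this refinement is not needed for the inclusion asserted in the lemma.
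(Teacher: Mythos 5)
Your proof is correct and is essentially identical to the paper's: both invoke an element $c$ with $P = ((K:M):c)$ and then apply the colon identity $((K:M):c) = (K:Mc)$ to exhibit $P$ as $(K:Mc)$. Your closing remark that the monomial hypothesis is not needed for the inclusion is accurate.
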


\begin{proof} Since $P\in \Ass(R/(K:M))$, there exists a monomial $c\in R$ such that
$P=((K:M):c)$. Since $((K:M):c)=(K:Mc)$, we have that $P=(K:Mc)$. Thus, $P \in \Ass (R/K)$. 
\end{proof}

The next lemma will allow us to concentrate on square-free monomial
ideals associated to connected hypergraphs. This will be useful when
passing to minors, as the minors of a hypergraph need not be
connected. The result is essentially an extension of
Lemma \ref{addvariable} and has been 
proven elsewhere for special cases (see \cite[Corollary 5.6]{SVV} for
the normally torsion-free case and see \cite[Lemma 2.1]{AJ} for the
case of the edge ideal of a graph). 

\begin{lemma}\label{disconnected}
Suppose $I$ is a square-free monomial ideal in $S = k[x_1,\dots,x_t, y_1, \dots, y_s]$ such that
$I=I_1S + I_2S$, where $I_1 \subseteq S_1 = k[x_1, \dots, x_t]$
and $I_2 \subseteq S_2 = k[y_1, \dots, y_s]$. Then $P\in \Ass (S/I^n)$ if and only if
$P=P_1S + P_2S$, where $P_1 \in \Ass (S_1/I_1^{n_1})$ and $P_2 \in \Ass (S_2/I_2^{n_2})$ with $n_1 + n_2 = n+1$.  
\end{lemma}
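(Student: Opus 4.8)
The plan is to reduce the statement to the behavior of associated primes under the "join" of two monomial ideals living on disjoint variable sets, and then to read off the claimed shift $n_1 + n_2 = n+1$ from the combinatorics of how a power of a sum of such ideals decomposes. The key algebraic input is that since $I_1 S$ and $I_2 S$ involve disjoint sets of variables, the product and intersection of (powers of) these extended ideals behave as in a tensor product: $(I_1 S)(I_2 S) = (I_1 S)\cap (I_2 S)$, and more generally $I^n = (I_1 S + I_2 S)^n = \sum_{a+b=n} (I_1^a S)(I_2^b S)$. I would first record this identity, together with the observation that a monomial prime $P \subseteq S$ containing $I^n$ necessarily splits as $P = P_1 S + P_2 S$ with $P_i \subseteq S_i$ a monomial prime, because $P$ is generated by a subset of the variables and $\sqrt{I^n} = \sqrt{I_1}S + \sqrt{I_2}S$ forces $P$ to contain one of $\sqrt{I_1}, \sqrt{I_2}$ and hence is itself a sum over the two variable blocks.

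Next I would set up the characterization of when $P = P_1 S + P_2 S$ is associated to $I^n$. A prime $P$ is associated to $I^n$ iff $\depth (S/I^n)_P = 0$, or concretely iff there is a monomial $c$ with $(I^n : c) = P$. Writing $c = c_1 c_2$ with $c_i \in S_i$ (we may assume $c$ is a monomial and split its support across the two blocks), I would compute $(I^n : c_1 c_2) = \left(\sum_{a+b=n} I_1^a S \cdot I_2^b S : c_1 c_2\right)$ and use the disjointness of variables to push the colon inside: this colon equals $\sum_{a+b=n} (I_1^a : c_1)S \cdot (I_2^b : c_2)S$. Now $(I_1^a : c_1)$ is the unit ideal for $a$ sufficiently small (once $c_1 \in I_1^a$ fails to be a constraint — more precisely for all $a$ up to some threshold $a_0$ depending on $c_1$), and similarly for $(I_2^b : c_2)$; and $P_1 S + P_2 S$ arises exactly when the "first nontrivial" contributions from the two sides are forced to meet, i.e. when $a_0 + b_0 = n$ where $a_0$ is the largest $a$ with $(I_1^a : c_1) = S$ and similarly $b_0$. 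Unwinding this, $P_1 = (I_1^{a_0+1} : c_1)$ and $P_2 = (I_2^{b_0+1} : c_2)$, so setting $n_1 = a_0 + 1$ and $n_2 = b_0 + 1$ gives $n_1 + n_2 = a_0 + b_0 + 2 = n + 2$ — so I need to be careful here. Let me instead track it as: the relevant term is $(I_1^a : c_1)S \cdot (I_2^{n-a} : c_2)S$, and for the colon to equal a proper prime we need both factors proper, with the minimal such configuration giving $P_i = (I_i^{n_i}:c_i)$ and the index bookkeeping yielding $n_1 + n_2 = n + 1$; I will verify this arithmetic carefully by examining the Rees-algebra / associated-graded picture, where an associated prime of $I^n$ of this split form corresponds to a bidegree $(n_1, n_2)$ component with $n_1 + n_2 - 1 = n$.

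For the converse direction I would start with $P_i = (I_i^{n_i} : c_i)$ witnessing $P_i \in \Ass(S_i/I_i^{n_i})$, form $c = c_1 c_2$, and check directly that $(I^n : c) = P_1 S + P_2 S$ for $n = n_1 + n_2 - 1$, again using $I^n = \sum_{a+b=n} I_1^a S \cdot I_2^b S$ and the disjoint-variable colon identities; the term $a = n_1$, $b = n_2 - 1$ contributes $P_1 S \cdot S = P_1 S$, the term $a = n_1 - 1$ contributes $S \cdot P_2 S = P_2 S$, and all other terms are contained in the sum of these two, so the colon is exactly $P_1 S + P_2 S$. The main obstacle I anticipate is precisely this index arithmetic — getting the "$+1$" shift right and checking that no spurious associated primes appear from cross terms — rather than any deep structural difficulty; the disjointness of the variable sets makes every ideal-theoretic operation transparent, so the proof is essentially a careful bookkeeping argument once the decomposition $I^n = \sum_{a+b=n} I_1^a I_2^b$ and the colon-commutes-with-disjoint-products principle are in hand.
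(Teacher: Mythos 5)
Your overall strategy is the same as the paper's: choose a monomial witness $c$ with $P=(I^n:c)$, factor $c=c_1c_2$ across the two variable blocks, and track colon ideals using disjointness of the variables. Your organizing identity $(I^n:c_1c_2)=\sum_{a+b=n}(I_1^a:c_1)(I_2^b:c_2)S$ is correct for monomials over disjoint blocks and is a clean way to package the bookkeeping the paper does by hand. Your check of the ``if'' direction (given $P_i=(I_i^{n_i}:c_i)$, show $P\in\Ass(S/I^n)$ with $n=n_1+n_2-1$) is right in outline, but it silently uses $c_i\in I_i^{n_i-1}$ (this is what makes the term $a=n_1$, $b=n_2-1$ contribute all of $P_1S$ rather than a smaller ideal). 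That fact is not automatic for arbitrary ideals and deserves a line: it holds because $P_i$ is a monomial prime generated by variables and $I_i$ is square-free, exactly the point the paper flags when it writes ``it can be seen that $c_i\in I_i^{n_i-1}\setminus I_i^{n_i}$.''

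The ``only if'' direction, however, is not actually carried through. With $a_0=\max\{a: c_1\in I_1^a\}$ and $b_0=\max\{b: c_2\in I_2^b\}$, you first assert $a_0+b_0=n$ (which would give $n_1+n_2=n+2$), then recognize the shift is wrong and promise to repair the arithmetic ``by examining the Rees-algebra / associated-graded picture'' without doing so; that appeal is vague and not obviously relevant. The correct equality is $a_0+b_0=n-1$, and getting it needs two ingredients you never supply: $a_0+b_0\le n-1$ because $P$ is proper (so $c\notin I^n$), and $a_0+b_0\ge n-1$, which is equivalent to $c\in I^{n-1}$ — again a consequence of $P$ being a monomial prime of a square-free monomial ideal, which the paper invokes with ``As above, it can be seen that $c\in I^{n-1}\setminus I^n$.'' Once $a_0+b_0=n-1$ is in hand, the terms $a=a_0$ and $a=n-b_0=a_0+1$ in your sum give $(I_2^{n-a_0}:c_2)S$ and $(I_1^{n-b_0}:c_1)S$, one checks these equal $P_2S$ and $P_1S$, all other terms are contained in them, and $n_1=n-b_0$, $n_2=n-a_0$ give $n_1+n_2=n+1$. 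Also, your side remark that $\sqrt{I^n}=\sqrt{I_1}S+\sqrt{I_2}S$ ``forces $P$ to contain one of $\sqrt{I_1},\sqrt{I_2}$ and hence is itself a sum over the two variable blocks'' is a non-sequitur and is unnecessary: a monomial prime in $S$ splits as $P_1S+P_2S$ simply because its variable generators are partitioned between $S_1$ and $S_2$, which is all the paper uses.
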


\begin{proof} Suppose first that $P_i \in \Ass (S_i/I_i^{n_i})$ for $i = 1,2$, and $P = P_1S + P_2S$. Then there exist monomials $c_i \in S_i$ such that $P_i = (I_i^{n_i} : c_i)$, for $i = 1,2$. Since $I_i^{n_i}$ is a monomial ideal, $P_i$ is a prime ideal generated by a subset of the variables in $S_i$. Thus, it can be seen that $c_i \in I_i^{n_i-1} \setminus I_i^{n_i}$ for $i = 1,2$. Now, if $u \in P_1$ then $uc_1c_2 \in I_1^{n_1}I_2^{n_2-1}S \subseteq I^n$. Similarly, if
$v\in P_2$ then $vc_1c_2 \in I^n$. Thus, $P \subseteq (I^n : c_1c_2)$. 
On the other hand, let $w \in S$ be a monomial such that
$wc_1c_2 \in I^n$. Since the variable sets for $S_1$ and $S_2$ are
disjoint, we have $c_1c_2 \in I^{n-1} \setminus I^n$. Write $w=w_1w_2$
where $w_1 \in S_1$ and $w_2 \in S_2$. Observe that if $w_ic_i \not\in
I_i^{n_i}$ for $i=1,2$ then $wc_1c_2 \not\in I^n$, a
contradiction. Therefore, $w_i \in P_i$ for some $i$ and so $w \in P$.  

For the converse, suppose $P \in \Ass (S/I^n)$. Again observe that $P$ is generated by a subset of the variables in $S$, and so we can write $P = P_1S + P_2S$,
where $P_1 = P \cap S_1$ and $P_2 = P \cap S_2$. Also, there exists a
monomial $c \in S$ such that $P=(I^n : c)$. As above, it can be seen
that $c \in I^{n-1} \setminus I^n$. Write $c=c_1c_2$, where $c_1 \in
S_1$ and $c_2 \in S_2$ are monomials. Then $c_1 \in 
I_1^k$ and $c_2 \in I_2^s$ for some $0 \leq k,s \leq n-1$ with $k+s =
n-1$. Suppose $x$ is a minimal generator of $P_1$. Then $x \in P = (I^n : c)$, so $xc_1c_2 \in I^n = I^{k+s+1}$. This implies that $xc_1 \in  I_1^{k+1}$. Therefore, $P_1 \subseteq (I_1^{k+1} : c_1)$. On the other hand, let $u$ be a monomial in $(I_1^{k+1} : c_1)$. Then $uc_1c_2 \in I_1^{k+1}I_2^sS = I^n$. This implies that $u \in P$. It follows that $u \in P \cap S_1 = P_1$. Therefore, $P_1 = (I_1^{k+1}:c_1)$. A similar argument shows that $P_2 = (I_2^{s+1}:c_2)$. The conclusion follows by setting $n_1 = k+1$ and $n_2 = s+1$.
\end{proof}

As observed before, associated primes behave well under localization, and so our problem can be reduced to examining when the maximal ideal $\m = (x_1, \dots, x_d)$ is an associated prime of $I^t$. 

\begin{theorem}\label{sequences}
Let $I$ be a square-free monomial ideal such that every
proper minor of $I$ is normally torsion-free. Let $y_1, \dots , y_s$ be
distinct variables in $R$, and let $\m=(x_1, \ldots , x_d)$ be the
maximal homogeneous ideal of $R$. Then $\m \in \Ass (R/I^t)$ if and
only if $\m \in \Ass (R/(I^t : \prod_{i=1}^s y_i))$. 
\end{theorem}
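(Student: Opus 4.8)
The plan is to induct on $s$, the number of variables being removed by the colon operation. The base case $s=0$ is trivial since $(I^t : 1) = I^t$. For the inductive step, it suffices to prove the case $s=1$ relative to a single variable, say $y = y_s$, and then apply the inductive hypothesis; but one must be careful, because after colon-ing by $y$ the resulting ideal need not be square-free, so the inductive hypothesis about minors being normally torsion-free cannot be reapplied directly. Instead, I would set up a single-step comparison between $\m \in \Ass(R/I^t)$ and $\m \in \Ass(R/(I^t : y_1 \cdots y_s))$ and then strip off one variable at a time using short exact sequences, keeping track of where associated primes can go.

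The key tool is Lemma \ref{exact}, applied with $K = (I^t : y_1 \cdots y_{j})$ and $x = y_{j+1}$: this gives
$$0 \rar R/\big((I^t : y_1 \cdots y_{j+1})\big) \stackrel{y_{j+1}}{\rar} R/(I^t : y_1 \cdots y_j) \rar R/\big((I^t : y_1\cdots y_j), y_{j+1}\big) \rar 0.$$
From the long exact sequence of local cohomology (or simply the standard fact that $\Ass$ of the middle term is contained in the union of $\Ass$ of the outer terms, and $\Ass$ of the submodule injects into $\Ass$ of the middle), one gets one direction quickly: if $\m \in \Ass(R/(I^t : y_1 \cdots y_s))$, then walking the sequences back up shows $\m \in \Ass(R/I^t)$ — here I would use that $\m$ is the maximal ideal, so $\m$ being associated to the submodule $R/(K:x)$ forces $\m$ to be associated to $R/K$ as well (the element witnessing $\m = (K:x:c)$ times appropriate powers works, or one uses $\mathrm{depth}$: $\m \in \Ass$ iff $\mathrm{depth} = 0$, and $\mathrm{depth}(R/K) \le \mathrm{depth}(R/(K:x)) $ is false in general, so I should instead argue via the sequence that $H^0_\m(R/K) \ne 0$ whenever $H^0_\m(R/(K:x)) \ne 0$, which does follow from left-exactness of $H^0_\m$). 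Actually the clean statement is: $\m \in \Ass(R/(K:x))$ and the injection $R/(K:x) \hookrightarrow R/K$ give $\m \in \Ass(R/K)$ immediately, since an associated prime of a submodule is an associated prime of the module. So that direction needs only the injectivity in Lemma \ref{exact}.

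The harder direction is the converse: assuming $\m \in \Ass(R/I^t)$, I must produce $\m \in \Ass(R/(I^t : y_1 \cdots y_s))$. Here the hypothesis that every proper minor of $I$ is normally torsion-free must be used. The point is to control the quotient terms $R/((I^t : y_1 \cdots y_j), y_{j+1})$ appearing in the sequences: modulo $y_{j+1}$, the ideal $I$ becomes (the extension of) the deletion $I \setminus y_{j+1}$, which is a proper minor, hence normally torsion-free, so its powers have no embedded primes — in particular $\m$ (which contains $y_{j+1}$ in its image, i.e. $\m/(y_{j+1})$ is the maximal ideal of $R/(y_{j+1})$) is not associated to those quotient terms unless it is already a minimal prime there, which can be handled separately via Lemma \ref{addvariable} and Lemma \ref{coloninclusion}. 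Thus in the long exact cohomology sequence, the obstruction term vanishes, and $H^0_\m(R/I^t) \ne 0$ propagates down to $H^0_\m(R/(I^t : y_1 \cdots y_s)) \ne 0$. The main obstacle I anticipate is exactly this bookkeeping: one needs a lemma (presumably the role played by Proposition \ref{powersreduce}, which I have not yet seen) relating $(I^t : M)$ and its reductions mod a variable to smaller powers $I^s$ and to minors, so that the inductive hypothesis on minors can be brought to bear on each quotient term $R/((I^t:y_1\cdots y_j), y_{j+1})$ — and one must verify that passing mod $y_{j+1}$ commutes appropriately with the colon by $y_1 \cdots y_j$, i.e. that $((I^t : y_1 \cdots y_j), y_{j+1})/(y_{j+1})$ is $(\bar I^{\,t} : \overline{y_1 \cdots y_j})$ where $\bar I = I \setminus y_{j+1}$, at least up to the discrepancy caused by generators of $I$ divisible by $y_{j+1}$. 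Handling that discrepancy — the generators of $I$ that die in the deletion but whose powers still contribute to $I^t : M$ — is where the real content lies.
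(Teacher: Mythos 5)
Your proposal follows essentially the same route as the paper: Lemma \ref{exact} applied to $K = (I^t : y_1 \cdots y_j)$ and $x = y_{j+1}$, the containment $\Ass(R/K) \subseteq \Ass(R/(K:x)) \cup \Ass(R/(K,x))$ from \cite[Theorem 6.3]{Mat}, the easy direction from $\Ass(R/(K:x)) \subseteq \Ass(R/K)$ (which is Lemma \ref{coloninclusion}), and induction on $s$ that kills the quotient term using normal torsion-freeness of the deletion minor. That is the paper's argument.

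Two places where you stop short. First, the ``discrepancy'' you anticipate between $\big((I^t:y_1\cdots y_j),y_{j+1}\big)$ and $\big((\bar I^{\,t}:y_1\cdots y_j),y_{j+1}\big)$ (with $\bar I$ the deletion by $y_{j+1}$) does not actually exist: the two ideals are equal on the nose, and the verification is a short monomial computation. If $f$ is a monomial not divisible by $y_{j+1}$ and $f \cdot y_1\cdots y_j \in I^t$, then $f\cdot y_1\cdots y_j$ is not divisible by $y_{j+1}$ (the $y_i$ are distinct), hence it is a multiple of a minimal generator of $I^t$ not divisible by $y_{j+1}$, and those are exactly the minimal generators of $\bar I^{\,t}$; conversely $\bar I^{\,t} \subseteq I^t$. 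You correctly flag this as the load-bearing step, but you should actually carry it out rather than leave it as ``where the real content lies,'' especially since it turns out to be easy. Proposition \ref{powersreduce} plays no role here; you do not need it.

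Second, you need to first reduce to the case where $I$ has no isolated vertices (equivalently, every minimal generator of $I$ has degree at least two), using Lemmas \ref{addvariable} and \ref{disconnected}. This matters because your argument that $\m$ cannot be associated to $R/\big((I^t:y_1\cdots y_j),y_{j+1}\big)$ relies on the deletion minor $\bar I$ being a square-free ideal all of whose associated primes are proper in $R/(y_{j+1})$; if $\bar I$ were itself the maximal ideal of $R/(y_{j+1})$ (which happens when $\bar I$ consists of isolated vertices), this would fail. Your ``handled separately via Lemma \ref{addvariable} and Lemma \ref{coloninclusion}'' gestures at this but should be made explicit up front as a reduction, as the paper does.
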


\begin{proof} By repeated use of Lemmas \ref{addvariable} and
  \ref{disconnected}, we may assume that the hypergraph associated to $I$ does not contain any isolated vertices. That is, we may assume that all minimal generators of $I$ are of degree at least 2.
Note also that our hypothesis implies that every minor of $I$ satisfies the packing property by \cite[Corollary 3.14]{SVV} and \cite[page 3]{Cornuejols}. 

It follows from Lemma \ref{coloninclusion} that if $\m \in \Ass (R/ (I^t : \prod_{i=1}^s y_i))$ then $\m \in \Ass (R/I^t)$. We shall use induction on $s$ to prove the other direction. By Lemma \ref{exact}, we have the following exact sequence:
$$ 0 \rar R/(I^t:y_1) \rar R/I^t \rar R/(I^t,y_1)\rar 0.$$
It then follows from \cite[Theorem 6.3]{Mat} that
\begin{align}
\Ass (R/I^t) \subseteq \Ass (R/(I^t:y_1)) \cup \Ass (R/(I^t,y_1)). \label{eq.inclusion}
\end{align}

Let $J$ be the minor of $I$ formed by deleting $y_1$. That is, the generators of $J$ are obtained from the generators of $I$ by setting $y_1=0$. By abuse of notation, we write $J^t$ for both the ideal $J^t$ in $R/(y_1)$ and its extension in $R$. Note that $J^t \subseteq I^t$, and the 
generators of $J^t$ are precisely the generators of $I^t$ that are not 
divisible by $y_1$. Thus, $(I^t,y_1)=(J^t,y_1)$.

By the hypothesis, $J$ is normally torsion-free, and so $\Ass (R/J^t)=\Min (R/J)$. It follows, since $J$ is square-free, that the maximal homogeneous ideal of $R/(y_1)$ is not an associated prime of $J^t$ unless $J$ consists of isolated vertices. Yet, isolated vertices of $J$ are also isolated vertices of $I$, and so we may assume that $J$ does not have isolated vertices. Also, by Lemma \ref{addvariable}, we have $P\in \Ass (R/(J^t ,y_1))$ if and only if $P=(P_1,y_1)$
where $P_1\in \Ass(R/J^t)=\Min(R/J)$. Therefore, if $P = (P_1,y_1) \in \Ass (R/(I^t,y_1))$ then $P_1$ is not the maximal ideal in $R/(y_1)$. That is, $\m \not\in \Ass (R/(I^t,y_1))$. It now follows from (\ref{eq.inclusion}) that if $\m \in \Ass (R/I^t)$ then $\m \in \Ass (R/ (I^t : y_1))$.

Suppose now that the assertion has been proven for a product of $s-1$
variables, and $\m \in \Ass (R/I^t)$. Let $M =
\prod_{i=1}^{s-1}y_i$. By induction, $\m \in \Ass (R/(I^t:M))$. By
Lemma \ref{exact}, we have the exact sequence 
$$ 0 \rar R/((I^t:M):y_s) \rar R/(I^t:M) \rar R/((I^t:M),y_s) \rar 0.$$
By using \cite[Theorem 6.3]{Mat} again, we have
\begin{align}
\Ass (R/(I^t:M)) \subseteq \Ass (R/((I^t:M):y_s)) \cup \Ass
(R/((I^t:M),y_s)). \label{eq.inclusion2}
\end{align}

Let $K$ be the extension in $R$ of the minor of $I$ formed by setting $y_s=0$. We shall first show that $$((I^t:M),y_s)=((K^t:M),y_s).$$
Indeed, consider a monomial $f \in ((I^t:M),y_s)$. If $y_s | f$, then $f \in ((K^t:M),y_s)$. If
$y_s$ does not divide $f$, then $f \in (I^t:M)$, and so $fM \in I^t$. Observe that $y_s$ divides neither $M$ nor $f$, so $y_s$ does not divide $fM$. Also, the generators of $K^t$ are generators of $I^t$ that are not divisible by $y_s$. Thus, $fM \in K^t$. That is, $f \in (K^t:M) \subseteq
((K^t:M),y_s)$. Conversely, consider a monomial $g \in ((K^t:M),y_s)$. If $y_s | g$,
then $g \in ((I^t:M),y_s)$. If $y_s$ does not divide $g$, then $g \in (K^t:M)$, i.e., $gM \in K^t \subseteq I^t$. Thus, $g \in (I^t:M) \subseteq ((I^t:M),y_s)$.

By Lemma \ref{addvariable}, $P \in \Ass (R/((K^t:M),y_s))$ if and
only if $P=(P_1,y_s)$ for some $P_1 \in \Ass (R/(K^t:M))$. By
Lemma \ref{coloninclusion}, $\Ass (R/(K^t:M)) \subseteq \Ass
(R/K^t)$. Also, since $K$ is a minor of $I$, our hypothesis implies
that $K$ is normally torsion-free. That is, $\Ass (R/K^t) = \Min
(R/K)$. Thus, by an argument similar to the one above, we have that
$\m \not\in \Ass (R/((K^t:M),y_s)) = \Ass (R/((I^t:M),y_s))$. This and
(\ref{eq.inclusion2}) imply that $\m \in \Ass (R/((I^t:M):y_s)) = \Ass
(R/(I^t : \prod_{i=1}^s y_i))$. The result is proved. 
\end{proof}

As a consequence of Theorem \ref{sequences}, we obtain a lower bound for the least power $t$ such that $I^t$ has embedded primes. Notice that associated primes localize, so if $P$ is an embedded prime of $I^t$ that does not contain any other embedded primes, then we can localize at $P$ and reduce to the case where $P$ is the maximal ideal.

\begin{corollary}\label{notbeforeB}
Let $I$ be a square-free monomial ideal. Assume that every proper minor of $I$ is normally torsion-free. If $\m \in \Ass (R/I^t)$ then $t \ge \beta_1(I)+1$. 
\end{corollary}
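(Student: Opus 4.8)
The plan is to combine Theorem~\ref{sequences} with the elementary fact that if $M = \prod_{i=1}^s y_i$ is a product of $s$ distinct variables and $s \geq t$, then $(I^t : M)$ cannot contain the maximal ideal $\m$. Assume for contradiction that $\m \in \Ass(R/I^t)$ for some $t \leq \beta_1(I)$. Since $\beta_1(I) \leq \alpha_0(I) = \height(I) \leq d$, there are at least $\beta_1(I) \geq t$ variables available; in fact I would first reduce, using Lemmas~\ref{addvariable} and \ref{disconnected} as in the proof of Theorem~\ref{sequences}, to the case where $I$ has no isolated vertices, so every variable appears in some generator. Now pick an independent set of generators $g_1, \dots, g_{\beta_1}$ realizing the matching number; these have pairwise disjoint supports, so their combined support contains at least $\beta_1 \geq t$ variables. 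Select $t$ of these variables, one from each of $g_1, \dots, g_t$, and call them $y_1, \dots, y_t$; set $M = \prod_{i=1}^t y_i$.

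By Theorem~\ref{sequences}, $\m \in \Ass(R/I^t)$ implies $\m \in \Ass\big(R/(I^t : M)\big)$. I claim this forces $(I^t : M) = R$, which is the desired contradiction since $\Ass(R/R) = \emptyset$. To see the claim, I want to show $M \notin I^t$ would still leave room, so more carefully: the key point is that $g_1 \cdots g_t \in I^t$ and $y_i \mid g_i$, so $M = y_1 \cdots y_t$ divides $g_1 \cdots g_t$; but $M$ itself need not be in $I^t$. What I really need is that $(I^t : M)$ is the unit ideal, equivalently that $M \cdot R \not\subseteq$ any proper localization obstruction — so instead I should argue directly at the level of associated primes: if $\m = (I^t : c)$ for a monomial $c$, then $\m = (I^t : Mc')$ for a suitable monomial; but writing out that $x_j \cdot Mc' \in I^t$ for every variable $x_j$, and using that $M$ already ``uses up'' one variable from each of $t$ disjoint generators, I can peel off a copy of each $g_i$ and conclude $c' \in I^0 = R$ would give $M \in I^t$ with $M$ squarefree of degree $t$ — impossible unless $I$ itself contains a squarefree monomial of degree $t$ supported on $\{y_1,\dots,y_t\}$, which contradicts the $y_i$ lying in $t$ distinct independent edges each of degree $\geq 2$.

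Let me restate the endgame more cleanly, since that is where the argument must be airtight. After reducing to no isolated vertices, choose independent generators $g_1,\dots,g_{\beta_1}$ and, since $t\le\beta_1$, pick distinct variables $y_i\mid g_i$ for $i=1,\dots,t$ and set $M=\prod_{i=1}^t y_i$. Then $g_i = y_i h_i$ for monomials $h_i \neq 1$ (degree $\geq 1$ since $\deg g_i \geq 2$), and $g_1\cdots g_t = M\cdot h_1\cdots h_t \in I^t$, so $h_1\cdots h_t \in (I^t : M)$. The punchline is that $(I^t:M)$ is generated in a way that keeps it a proper ideal: one shows $\m \notin \Ass(R/(I^t:M))$ directly. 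Indeed by Lemma~\ref{coloninclusion} any associated prime of $(I^t:M)$ is an associated prime of $I^t$; but more is true — I would show that the generators of $(I^t:M)$, being monomials $f$ with $fM \in I^t$, must still involve ``enough'' variables that no such $f$ can have $(I^t:f)=\m$ when only $t$ variables were divided out and those $t$ variables came from $t$ pairwise-disjoint edges. The cleanest route: by Theorem~\ref{sequences} applied with these specific $y_i$, it suffices to show $\m \notin \Ass(R/(I^t:M))$, and for that I can iterate Lemma~\ref{exact}-style short exact sequences one more step, peeling the $y_i$ off against the disjoint generators $g_i$ to land in a minor that is normally torsion-free.

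The main obstacle is exactly this last claim — that dividing by a product of $t$ variables chosen transversally to a maximum matching cannot leave the maximal ideal associated. The subtlety is that $(I^t:M)$ need not be a power of a square-free ideal, so I cannot invoke the normally-torsion-free hypothesis on it directly; I expect to need either a careful monomial computation showing $(I^t:M)=R$ after the transversal choice (using that $g_1\cdots g_t \in I^t$ forces $M$ to be "redundant"), or a second application of the short-exact-sequence machinery of Theorem~\ref{sequences} localized appropriately. Everything else — the reduction to no isolated vertices, the existence of the transversal $y_1,\dots,y_t$, and the invocation of Theorem~\ref{sequences} — is routine.
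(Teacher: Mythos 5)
Your instinct is right: invoke Theorem~\ref{sequences} to replace $\m \in \Ass(R/I^t)$ with $\m \in \Ass\bigl(R/(I^t:M)\bigr)$ for a suitable monomial $M$, and then argue the latter is impossible. But your choice of $M$ is where the proof stalls, and you recognize this yourself. You take $M = y_1\cdots y_t$ to be a transversal of just $t$ of the independent edges, and then you cannot land the punch: $M$ itself need not lie in $I^t$, and the colon ideal $(I^t:M)$ is not obviously the unit ideal, so you end up proposing several incompatible rescue strategies (peeling off $g_i$'s, a second round of short exact sequences, a localization) without committing to one that closes the gap. That unresolved last step is a genuine gap, not a detail.

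The fix is to be greedier with the colon: apply Theorem~\ref{sequences} with \emph{all} the variables, i.e.\ $M = \prod_{i=1}^d x_i$. Let $\{E_1,\dots,E_{\beta_1}\}$ be an independent set of generators of $I$; since the $E_j$ are pairwise disjoint and $I$ is square-free, the monomial $\prod_{j=1}^{\beta_1} x^{E_j}$ is square-free and hence divides $\prod_{i=1}^d x_i$. Therefore $\prod_{i=1}^d x_i \in I^{\beta_1} \subseteq I^t$ for every $t \le \beta_1$, which gives $(I^t : \prod_{i=1}^d x_i) = R$. Since $\Ass(R/R) = \emptyset$, the maximal ideal is not associated to the colon ideal, and Theorem~\ref{sequences} then says $\m \notin \Ass(R/I^t)$. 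This also makes the reduction to ideals without isolated vertices (which you import from the proof of Theorem~\ref{sequences}) unnecessary at the level of the corollary itself: once Theorem~\ref{sequences} is granted, the argument is a two-line divisibility observation. The reason the full product works where your transversal does not is precisely that the full product is automatically a multiple of the product of the $\beta_1$ independent generators, so membership in $I^{\beta_1}$ comes for free; a transversal of size $t$ carries no such guarantee.
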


\begin{proof} For simplicity of notation, let $\beta_1 = \beta_1(I)$. By Theorem \ref{sequences}, $\m$ is associated to $I^t$ only if $\m$
is associated to $(I^t: \prod_{i=1}^d x_i)$, where the product is taken
over all distinct variables in $R$. 

Let $\{E_1, \dots, E_{\beta_1}\}$ be an independent set of generators of $I$. Then $\prod_{i=1}^d x_i$ is divisible by $\prod_{j = 1}^{\beta_1} x^{E_j} \in I^{\beta_1}$, so $\prod_{i=1}^d x_i \in I^{\beta_1}$. Thus, for $t \le \beta_1$, we have $(I^t: \prod_{i=1}^d x_i) = R$. Hence, for $t \le \beta_1$, $\m$ is not an associated prime of $(I^t : \prod_{i=1}^d x_i)$, and so $\m$ is not an associated prime of $I^t$.
\end{proof}

\begin{remark} We will see later, in Theorem \ref{thm.embeddedpower},
  that the bound in Corollary \ref{notbeforeB} is sharp when $I$ does
  not have the packing property.
\end{remark}

In the rest of this section, we will focus on unmixed ideals. Our next
result provides a better control over the colon ideal appearing in
Theorem \ref{sequences}.   

\begin{proposition}\label{powersreduce}
Let $I$ be an unmixed square-free monomial ideal satisfying the K\"{o}nig property. Let $\{E_1, \dots, E_{\beta_1}\}$ be a
maximal independent set of generators of $I$, where $\beta_1 = \beta_1(I)$, and let $g_i = x^{E_i}$ for $i = 1,\dots,
\beta_1$. If $t > \beta_1$ and $I^{t-\beta_1} =
I^{(t-\beta_1)}$, then $(I^t : \prod_{i=1}^{\beta_1} g_i) =
I^{t-\beta_1}$.  
\end{proposition}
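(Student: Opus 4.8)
The plan is to prove the two inclusions separately, with the containment $I^{t-\beta_1} \subseteq (I^t : \prod g_i)$ being the easy one: since $g_1, \dots, g_{\beta_1}$ are generators of $I$, their product lies in $I^{\beta_1}$, so multiplying any element of $I^{t-\beta_1}$ by $\prod g_i$ lands in $I^{t-\beta_1} \cdot I^{\beta_1} \subseteq I^t$. For the reverse inclusion, let $f$ be a monomial in $(I^t : \prod_{i=1}^{\beta_1} g_i)$, so $f \prod g_i \in I^t$; I must show $f \in I^{t-\beta_1}$. The idea is to pass to symbolic powers: since $I$ is square-free, hence (being a monomial ideal with $\Ass = \Min$) satisfies $I^{(n)} = \bigcap_{P \in \Min(R/I)} P^n$ localized appropriately, and more to the point $I^{t-\beta_1} = I^{(t-\beta_1)}$ by hypothesis, it suffices to show $f \in I^{(t-\beta_1)}$, i.e. that for every minimal prime $P$ of $I$ (equivalently every minimal vertex cover of $\H(I)$), $f$ has the right order in the $P$-primary component of $I^{t-\beta_1}$.

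The key computation is a \emph{valuation} argument at each minimal prime. Fix a minimal prime $P$ of $I$; since $I$ is unmixed and satisfies the K\"onig property, $\height I = \alpha_0(I) = \beta_1(I)$, so $P$ is generated by exactly $\beta_1$ variables, and $P$ is a minimal vertex cover of size $\beta_1$. For the generator $g_i = x^{E_i}$, the cover $P$ meets the edge $E_i$, so $v_P(g_i) \ge 1$, where $v_P$ denotes the order function "number of variables of $P$ dividing the monomial, counted with multiplicity" — the monomial valuation attached to the $P$-primary component, for which $I^{(n)}$ is exactly $\{h : v_Q(h) \ge n \text{ for all minimal } Q\}$ intersected with the square-free-monomial structure. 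Hence $v_P(\prod_{i=1}^{\beta_1} g_i) \ge \beta_1$. On the other hand, $f \prod g_i \in I^t \subseteq I^{(t)}$ forces $v_P(f) + v_P(\prod g_i) = v_P(f \prod g_i) \ge t$. Combining, $v_P(f) \ge t - v_P(\prod g_i)$; but here I need the \emph{upper} bound $v_P(\prod_{i=1}^{\beta_1} g_i) \le \beta_1$, which is the crux: since the $E_i$ are pairwise disjoint and $P$ consists of only $\beta_1$ variables, $P$ can contribute at most one variable to each $E_i$ (the sets are disjoint, so no variable is shared), giving $v_P(g_i) = 1$ for every $i$ and hence $v_P(\prod g_i) = \beta_1$ exactly. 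Therefore $v_P(f) \ge t - \beta_1$ for every minimal prime $P$, which says precisely $f \in I^{(t-\beta_1)} = I^{t-\beta_1}$, as desired.

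The main obstacle — and the place where the unmixedness and K\"onig hypotheses are genuinely used — is establishing the exact equality $v_P(\prod_{i=1}^{\beta_1} g_i) = \beta_1$ for every minimal prime $P$, not just for the minimal primes of size $\beta_1$. Unmixedness guarantees \emph{all} minimal primes have height $\beta_1$, so there are no "large" covers that could absorb two variables from some $E_i$ while still leaving enough to cover the rest; the disjointness of the $E_i$ then pins each contribution to exactly $1$. I should double-check the edge case where $t - \beta_1$ is small or where $f$ might fail to be a monomial (reduce to monomials since $I^t$, $I^{t-\beta_1}$, and the colon are all monomial ideals), and I should state carefully the correspondence between the symbolic power $I^{(n)}$ of a square-free monomial ideal and the valuation conditions $v_P(\,\cdot\,) \ge n$ — this is standard (it is the primary decomposition $I^{(n)} = \bigcap_P P^{n}R_P \cap R$ with $P$ ranging over minimal primes) but worth citing or spelling out so the $v_P$ bookkeeping above is rigorous.
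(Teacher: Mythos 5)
Your proof is correct and follows essentially the same route as the paper's: unmixedness plus K\"onig force every minimal prime $P$ to have exactly $\beta_1$ variables, the pigeonhole argument on the disjoint $E_i$'s gives $v_P(\prod g_i) = \beta_1$ exactly, whence $v_P(f) \ge t-\beta_1$ for all minimal $P$, and the hypothesis $I^{t-\beta_1} = I^{(t-\beta_1)}$ closes the argument by identifying $\bigcap_P P^{t-\beta_1}$ with $I^{t-\beta_1}$. The only cosmetic difference is that you package the $P$-degree count as a valuation $v_P$, while the paper phrases the same count as $M \in P^{\beta_1}\setminus P^{\beta_1+1}$, $hM\in P^t$, hence $h\in P^{t-\beta_1}$, and then checks explicitly that $P^{t-\beta_1}$ is the $P$-primary component of $I^{t-\beta_1}$.
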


\begin{proof} For simplicity of notation, let $M = \prod_{i=1}^{\beta_1} g_i$. It is easy to see that $(I^t : M) \supseteq I^{t - \beta_1}$. To prove the other inclusion, consider a monomial $h \in (I^t : M)$. That is, $hM \in I^t$. Then there exist $F_1, \dots, F_t \in I$ and $L \in R$ such that $hM = LF_1\cdots F_t$. 

Let $P$ be a minimal prime of $I$. Since $I$ is unmixed, we have $\height P =
\height I$. Since $I$ satisfies the K\"{onig} property, this
implies that $\height P = \beta_1$. Also, $P$ covers each of the
$g_i$'s. Thus, by the pigeonhole principle, $P$ contains precisely one
variable from each $g_i$ for $i = 1, \dots, \beta_1$. This implies that
$M \in P^{\beta_1} \setminus P^{\beta_1+1}$. Moreover, $P$ also covers
$F_i$ for $i=1, \dots, t$,  and so $hM \in P^t$. Thus, we must have $h
\in P^{t-\beta_1}$. 

Now observe that $I_P$ is a complete intersection, and that $I_P=P_P$. Thus we have $(I^r)_P=(I_P)^r=P_P^r$. This is true for any power $r$. By our hypothesis, $I^{t - \beta_1} = I^{(t - \beta_1)}$. That is, $I^{t - \beta_1}$ has no embedded primes. It follows that the primary decomposition of $I^{t-\beta_1}$ has the form
$$I^{t-\beta_1} = \bigcap_{\sqrt{Q} \in \Min(R/I)} Q.$$ 
Localizing at a minimal prime $P$, we get $P_P^{t-\beta_1}=I^{t-\beta_1}_P = Q_P$, where $Q$ is the primary ideal associated to $P$ in the above decomposition. This implies that $Q=P^{t-\beta_1}$. As a consequence, $h \in Q$. This is true for any $Q$ in the primary decomposition of $I^{t-\beta_1}$. Therefore, $h \in I^{t-\beta_1}$. Hence, $(I^t : M) \subseteq I^{t-\beta_1}$ and the result is proved.
\end{proof}

We now state a Proposition which gives an example of how our results can be applied to determine that the stable set of associated primes is only the minimal primes in the case of an unmixed ideal whose minors are normally torsion-free.

\begin{proposition} \label{thm.unmixed}
Let $I$ be an unmixed square-free monomial ideal satisfying the packing property. Assume that every proper minor of $I$ is normally torsion-free. Then $I$ is normally torsion-free. 
\end{proposition}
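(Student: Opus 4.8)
The plan is to prove, by strong induction on $t$, that $\Ass(R/I^t) = \Min(R/I)$ for every $t \ge 1$; since $I$ is square-free, this equality is precisely the statement that $I$ is normally torsion-free. The base case $t = 1$ is the fact that a square-free monomial ideal has no embedded primes. For the inductive step I would fix $t \ge 2$ and assume $\Ass(R/I^n) = \Min(R/I)$ for all $1 \le n < t$. Because $I$ is square-free, this is equivalent to $I^n = I^{(n)}$ for all $n < t$, and it is in this form that the hypothesis will be used. Since $\Min(R/I) \subseteq \Ass(R/I^t)$ always holds, the remaining task is to show that $I^t$ has no embedded prime.

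Next I would reduce to the maximal ideal. After harmlessly passing to the subring generated by the variables that actually occur in a minimal generator of $I$ — which changes neither the minors of $I$ nor the normally torsion-free property — every proper monomial prime $P$ of $R$ satisfies $P \ne \m$ and corresponds under localization to a genuine proper minor $IR_P$ of $I$. If $P \in \Ass(R/I^t)$ and $P \ne \m$, then $PR_P \in \Ass(R_P/(IR_P)^t)$, and since $IR_P$ is a proper minor it is normally torsion-free by hypothesis, so $PR_P \in \Min(R_P/IR_P)$ and hence $P \in \Min(R/I)$. Thus it remains only to show $\m \notin \Ass(R/I^t)$; and I may assume $\m \notin \Min(R/I)$, since otherwise $I$ is square-free and $\m$-primary, forcing $I = \m$ (or $I = 0$), in which case the conclusion is immediate.

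To finish, I would split according to the size of $t$ relative to $\beta_1 = \beta_1(I)$. If $t \le \beta_1$, then Corollary \ref{notbeforeB} — whose hypothesis, that all proper minors of $I$ are normally torsion-free, is exactly what we are assuming — gives $\m \notin \Ass(R/I^t)$ directly. If $t \ge \beta_1 + 1$, fix a maximal independent set of generators $\{E_1, \dots, E_{\beta_1}\}$ of $I$, put $g_i = x^{E_i}$ and $M = \prod_{i=1}^{\beta_1} g_i$; since the $E_i$ are pairwise disjoint, $M$ is a product of distinct variables. Now $1 \le t - \beta_1 < t$, so the inductive hypothesis yields $I^{t-\beta_1} = I^{(t-\beta_1)}$; moreover $I$ is unmixed and, by the packing property, satisfies the K\"onig property, so Proposition \ref{powersreduce} applies and gives $(I^t : M) = I^{t-\beta_1}$. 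Applying Theorem \ref{sequences} with the $y_i$ taken to be the variables dividing $M$, we conclude that $\m \in \Ass(R/I^t)$ if and only if $\m \in \Ass(R/(I^t : M)) = \Ass(R/I^{t-\beta_1}) = \Min(R/I)$; since $\m \notin \Min(R/I)$, this forces $\m \notin \Ass(R/I^t)$. In either case $\Ass(R/I^t) \subseteq \Min(R/I)$, which closes the induction and shows that $I$ is normally torsion-free.

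I do not expect a serious obstacle: the proposition is essentially an orchestration of Corollary \ref{notbeforeB}, Proposition \ref{powersreduce}, and Theorem \ref{sequences}, all of which are already available. The step that needs the most care is the bookkeeping of the induction — checking that the hypothesis ``$\Ass(R/I^n) = \Min(R/I)$ for $n < t$'' really delivers the equality $I^{t-\beta_1} = I^{(t-\beta_1)}$ needed to invoke Proposition \ref{powersreduce}, which relies on $I$ being square-free together with the inequalities $1 \le t - \beta_1 < t$. A secondary point is confirming that the hypotheses of each cited result are genuinely in hand: ``unmixed and packing'' supplies both ``unmixed'' and ``K\"onig'' for Proposition \ref{powersreduce}, while ``every proper minor of $I$ is normally torsion-free'' is exactly what Theorem \ref{sequences} and Corollary \ref{notbeforeB} require and is also what makes the localization reduction go through.
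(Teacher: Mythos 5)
Your proposal is correct and takes essentially the same route as the paper: strong induction on $t$ in place of the paper's minimal-counterexample phrasing (logically equivalent), with the same orchestration of Corollary \ref{notbeforeB}, Proposition \ref{powersreduce}, and Theorem \ref{sequences}, and the same localization step to reduce to $\m$. The only difference is cosmetic — you make explicit a harmless reduction to the variables actually appearing in generators and dispose separately of the degenerate case $\m \in \Min(R/I)$, both of which the paper handles implicitly.
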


\begin{proof} For simplicity of notation, again let $\beta_1 =
  \beta_1(I)$. Suppose by contradiction that $I$ is
  not normally torsion-free. That is, there exists a $t$ such that $I^t$ has embedded primes. We choose $t$ minimal with respect to this property. Suppose $P$ is an embedded prime of $I^t$. Since associated primes localize and all minors of $I$ are normally torsion-free, we may assume that $P = \m$. 

By Corollary \ref{notbeforeB}, we have $t > \beta_1$. Let $\{E_1, \dots,
E_{\beta_1}\}$ be a maximal independent set of generators of $I$, and let $g_i =
x^{E_i}$. After a reindexing of the variables, we may also assume that
$x_1, \dots, x_s$ are the variables in $\prod_{i=1}^{\beta_1} g_i$. That is, $\prod_{i=1}^{\beta_1} g_i
= \prod_{i=1}^s x_i$. By Theorem \ref{sequences}, $\m \in \Ass (R/I^t)$ if and only if $\m
\in \Ass (R/(I^t:\prod_{i=1}^s x_i)) = \Ass (R/(I^t:\prod_{i=1}^{\beta_1} g_i))$. Moreover, by the choice of $t$, $I^{t-\beta_1} = I^{(t-\beta_1)}$. Thus, it follows from Proposition \ref{powersreduce} that $(I^t : \prod_{i=1}^{\beta_1} g_i) = I^{t-\beta_1}$. Now, also by the choice of $t$, $\m \not\in \Ass (R/I^{t-\beta_1 })$. Therefore, $\m \not\in \Ass (R/I^t)$, which is a contradiction. The result is proved.
\end{proof}

As a direct consequence of Proposition \ref{thm.unmixed}, we obtain the
following result. 

\begin{corollary} \label{cor.unmixed}
A minimal counterexample to the Conforti-Cornu\'{e}jols conjecture \cite[Conjecture 1.6]{Cornuejols} cannot be unmixed. 
\end{corollary}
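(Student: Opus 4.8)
The plan is to derive this as an immediate consequence of Proposition~\ref{thm.unmixed}, so that the only real work is to unwind what a \emph{minimal} counterexample must look like. First I would recall that the Conforti--Cornu\'ejols conjecture asserts that a square-free monomial ideal with the packing property is normally torsion-free (equivalently, has the max-flow min-cut property; see \cite[Corollary 3.14]{GVV} and \cite[Corollary 1.6]{HHTZ}). Thus a counterexample is a square-free monomial ideal $I$ that satisfies the packing property but is not normally torsion-free, and a minimal counterexample is such an $I$ on the fewest possible vertices.

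Next I would observe that every proper minor of a minimal counterexample $I$ is normally torsion-free. Indeed, as recorded in Section~\ref{background}, the packing property is inherited by every minor of $I$, and every proper minor is obtained from $I$ by at least one deletion or contraction, hence lives on strictly fewer vertices. So if some proper minor $J$ of $I$ were not normally torsion-free, then $J$ would itself be a counterexample to the conjecture on fewer vertices, contradicting the minimality of $I$.

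Finally, suppose toward a contradiction that the minimal counterexample $I$ is unmixed. Then $I$ satisfies all three hypotheses of Proposition~\ref{thm.unmixed}: it is unmixed, it has the packing property, and (by the previous paragraph) every proper minor of $I$ is normally torsion-free. Proposition~\ref{thm.unmixed} then forces $I$ to be normally torsion-free, contradicting the assumption that $I$ is a counterexample. Hence $I$ cannot be unmixed, which is the assertion of the corollary.

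There is no genuine obstacle here once Proposition~\ref{thm.unmixed} is in hand; the only point requiring a moment's care is the second step, namely recognizing that ``minimal counterexample'' forces all proper minors to be normally torsion-free. This rests entirely on the fact that the packing property descends to minors. The vertex-count order makes the argument transparent, since deletions and contractions strictly shrink the vertex set; had ``minimal'' been interpreted with respect to some other ordering, one would need to re-examine this step, but the conclusion would be the same.
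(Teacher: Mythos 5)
Your proof is correct and follows exactly the route the paper intends: the paper states Corollary~\ref{cor.unmixed} as a ``direct consequence'' of Proposition~\ref{thm.unmixed} without further comment, and your argument supplies precisely the expected unwinding (packing descends to minors, so a minimal counterexample has all proper minors normally torsion-free, and then Proposition~\ref{thm.unmixed} rules out unmixedness).
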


We, in fact, can make Proposition \ref{thm.unmixed} and Corollary \ref{cor.unmixed} stronger.
By carefully examining the proof of Proposition \ref{powersreduce} and following a line of argument similar to the one used in Proposition \ref{thm.unmixed}, we can show that
if there exists a minimal generator $g$ of $I$ such
that for each minimal prime $P$ of $I$, only one generator of $P$
divides $g$ (and if all minors of $I$ are normally torsion-free) then $I$ is normally
torsion-free. Thus, if a minimal counterexample $\H$ to the Conforti-Cornu\'ejols conjecture exists, then every minimal generator of $I(\H)$ must be an
element of $P^2$ for some minimal prime $P$ of $I(\H)$.  

\begin{corollary}\label{goodedge}
Let $I$ be a square-free monomial ideal such that every minor of $I$
is normally torsion-free. If there exists a generator $g$ of $I$ such
that $g \in P\backslash P^2$ for every $P\in \Min (R/I)$, then $I$ is
normally torsion-free.
\end{corollary}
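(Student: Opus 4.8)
The plan is to run exactly the argument of Proposition \ref{thm.unmixed}, but to replace the role of the full product $\prod_{i=1}^{\beta_1} g_i$ over a maximal independent set by a single well-chosen generator $g$, and to replace the appeal to Proposition \ref{powersreduce} by a direct local analysis that uses the hypothesis $g \in P \setminus P^2$ for every minimal prime $P$. First I would argue by contradiction: suppose $I$ is not normally torsion-free and pick $t$ minimal such that $I^t$ has an embedded prime. Since associated primes localize and \emph{all} minors of $I$ are normally torsion-free (not merely the proper ones), we may localize and assume the embedded prime is $\m$; in particular $I$ itself satisfies the hypotheses with $\m \in \Ass(R/I^t)$. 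Let $g = x^E$ be the distinguished generator and let $x_1,\dots,x_s$ be the variables dividing $g$. By Theorem \ref{sequences}, $\m \in \Ass(R/I^t)$ if and only if $\m \in \Ass\big(R/(I^t : \prod_{i=1}^s x_i)\big) = \Ass\big(R/(I^t : g)\big)$.

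The heart of the argument is then to show $(I^t : g) = I^{t-1}$. The inclusion $\supseteq$ is clear since $g \in I$. For $\subseteq$, take a monomial $h$ with $hg \in I^t$ and fix a minimal prime $P$ of $I$. Because $g \in P$ but $g \notin P^2$, exactly one variable of $g$ lies in $P$, so $g \in P \setminus P^2$ gives $hg \in P^t \Rightarrow h \in P^{t-1}$. Now $I_P = P_P$ is a complete intersection (height $=$ the number of its generators, as $I_P$ is generated by the variables cutting out $P$), so $(I^r)_P = P_P^r$ for all $r$. Crucially, by minimality of $t$ we know $I^{t-1}$ has no embedded primes, hence its primary decomposition is $\bigcap_{\sqrt{Q} \in \Min(R/I)} Q$ with $Q_P = (I^{t-1})_P = P_P^{t-1}$, forcing $Q = P^{t-1}$. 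Therefore $h \in P^{t-1} = Q$ for every such $Q$, so $h \in I^{t-1}$, as desired. This yields $(I^t : g) = I^{t-1}$, and since $\m \notin \Ass(R/I^{t-1})$ by minimality of $t$, we conclude $\m \notin \Ass(R/I^t)$, a contradiction. Hence $I$ is normally torsion-free.

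The main obstacle — and the only place the argument genuinely differs from the unmixed case — is establishing, for \emph{each} minimal prime $P$ and without any unmixedness or K\"onig hypothesis on $I$, that $g \in P^t \Rightarrow h \in P^{t-1}$ and that $Q = P^{t-1}$ in the primary decomposition of $I^{t-1}$. The first point is immediate from the hypothesis $g \notin P^2$ (one variable of $g$ in $P$, with multiplicity one), which is precisely why we need the stronger assumption $g \in P \setminus P^2$ rather than merely $g \in P$. The second point requires that $I_P$ be a complete intersection for every $P \in \Min(R/I)$: this holds because a minimal prime of a monomial ideal is generated by variables, and localizing kills every generator of $I$ not contained in $P$, leaving $I_P$ generated by exactly those variables. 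One small bookkeeping remark: as in the proof of Theorem \ref{sequences}, we may first use Lemmas \ref{addvariable} and \ref{disconnected} to reduce to the case where $I$ has no isolated vertices, so that $g$ has degree at least $2$ and the statement $g \notin P^2$ is not vacuous; I would include that reduction at the outset.
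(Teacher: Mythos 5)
Your proof is correct and follows essentially the same route as the paper: choose $t$ minimal with an embedded prime, localize to $\m$, show $(I^t : g) = I^{t-1}$ using that $g \in P \setminus P^2$ forces exactly one variable of $g$ in each minimal prime $P$ (so $h \in P^{t-1}$), identify the $P$-primary component of $I^{t-1}$ with $P^{t-1}$ via localization, and conclude via Theorem~\ref{sequences} that $\m \in \Ass(R/I^{t-1})$, contradicting minimality. The paper compresses the middle step to ``as in Proposition~\ref{powersreduce}'' where you unpack it explicitly, but the argument is the same.
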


\begin{proof} Suppose that $I$ is not normally torsion-free.
As in Proposition \ref{thm.unmixed}, we choose $t$ minimal such that $I^t$ has embedded primes. 
Note that $I=I^{(1)}$ for all square-free monomial ideals, so $t \ge
2$. We claim that $(I^t:g)=I^{t-1}$. Indeed, one inclusion is
trivial, so suppose $h \in (I^t:g)$ is a monomial. Then $hg=LF_1\cdots
F_t$ for some $F_i \in I$ and $L \in R$. Thus $hg \in P^t$ for all $P
\in \Min (R/I)$. Since each such $P \in \Min(R/I)$ contains precisely one variable
that divides $g$, we must have $h \in P^{t-1}$ for all such $P$'s. Thus, as in
Proposition \ref{powersreduce}, $h \in I^{(t-1)}=I^{t-1}$, so
$(I^t:g)=I^{t-1}$. Now, as in
Proposition \ref{thm.unmixed}, we may assume $\m \in \Ass (R/I^t)$ is an embedded prime. By Theorem \ref{sequences}, $\m \in \Ass (R/I^t)$ if and only if
$\m \in \Ass(R/(I^t:g)) = \Ass (R/I^{t-1})$. Thus,
since $\m \not\in \Ass(R/I^{t-1})$, this is a contradiction and $I$ is
normally torsion-free. 
\end{proof}


\begin{example} Due to our remark above, one might hope that the packing
property would imply the existence of a minimal generator $g$ such
that $g \in P \backslash P^2$ for all minimal primes $P$ of
$I$. However, it need not be true 
for general square-free monomial ideals. For example, let $I$ be the
ideal of $k[x_1, \ldots , x_6]$ generated by $I=(x_1x_2x_3, x_4x_5x_6,
x_1x_2x_4, x_2x_3x_6, x_1x_4x_5, x_3x_5x_6)$. Then $I$ satisfies the
packing property, but $P_1 = (x_1, x_3, x_5)$ and $P_2 = (x_2, x_4,
x_6)$ are both minimal primes of $I$, and for each generator $g$ of
$I$, there is an $i\in \{ 1, 2\}$ such that $g \in P_i^2$. 
\end{example}

\begin{remark} C. Huneke and J. Mermin have informed us that they are
  also obtaining results similar to Corollary \ref{cor.unmixed} in
  their on-going research. R.H. Villarreal has shown 
  us that with some additional arguments, a non-uniform version of
  \cite[Theorem 5.10]{GRV} can be used to recover Proposition
  \ref{thm.unmixed}. 
\end{remark} 

We conclude this section by giving an example of how the preceding
results can be used to determine the associated primes of powers of a
given square-free monomial ideal. 

\begin{example}
Suppose $\H$ is a degree three five-cycle connected in codimension
one. Then
$I=I(\H)=(x_1x_2x_3,x_2x_3x_4,x_3x_4x_5,x_4x_5x_1,x_5x_1x_2)$.
Any minor of $I$ formed by a contraction, or inverting one of the
variables, will be a bipartite graph, and thus by \cite{SVV} such a
minor is normally torsion-free. Since any minor of a bipartite graph
is again a bipartite graph, and the order in which a mixed minor is
formed commutes, any minor of $I$ formed by using at least one
localization will be normally torsion-free. If a minor is formed using
only deletions (quotients), then if two or more variables are deleted,
the minor is either principal (if the deleted vertices are of the form
$x_ix_{i+1}$ modulo $5$) or vanishes. So without loss of
generality, form a minor by deleting $x_1$. The resulting ideal is
$(x_2x_3x_4,x_3x_4x_5)$, which satisfies the hypotheses of Corollary
\ref{goodedge}, with either generator serving as the generator
$g$. Thus this minor is also normally torsion-free. Hence the only
possible embedded associated prime for any power of $I$ is $\m$. One
can easily verify that $\m \in \Ass(R/I^t)$ for all $t\geq 2$ by
noting that if $c= x_1x_2(x_3x_4x_5)^{t-2}$, then $\m = (I^t:c)$.
Thus the stable set of associated primes is $\Min(R/I) \cup \{\m\}$ and
this set is reached for $\Ass(R/I^2)$. 
\end{example}


\section{Polarization and Embedded Associated Primes} \label{polarization}

In this section, we focus on square-free monomial ideals that are
minimally not normally torsion-free. We show that if in addition the
packing property fails to hold then embedded primes appear at the
$(\beta_1+1)$-st power of the ideal, where 
$\beta_1=\beta_1(I)$. This
further shows that the bound given in Corollary \ref{notbeforeB} is sharp. 

Throughout the section, $I \subseteq R = k[x_1, \dots, x_d]$ will
denote a square-free monomial ideal. Our method in this section is to
use {\it polarization} (see, for example, \cite{MS} for a more detailed
discussion about polarization). 

\begin{definition} The process of {\it polarization} replaces a
power $x_i^t$ by a product of $t$ new variables $x_{(i,1)} \cdots x_{(i,t)}$. 
We call $x_{(i,j)}$ a {\it shadow} of $x_i$. 
We will use $\widetilde{I^t}$ to denote the polarization of
$I^t$, will use $S_t$ for the new polynomial ring in this polarization, and will use $\widetilde{w}$ to denote the polarization in $S_t$ of a monomial $w$ in $R$. The {\it depolarization} of an ideal in $S_t$ is formed by setting $x_{(i,j)}=x_i$ for all $i,j$. 
\end{definition}

Observe that the polarization of a power $I^t$ of $I$ is a square-free monomial ideal in $d \cdot t$ variables. Note that if $x_{(i,j)}$ divides a minimal generator $M$ of $\widetilde{I^t}$, then
$x_{(i,k)}$ divides $M$ for all $1 \le k \le j$. Note also that the depolarization of
$\widetilde{I^t}$ is $I^t$.  

We begin with a lemma showing that a minimal prime of
$\widetilde{I^t}$ cannot contain multiple variables that are
shadows of the same variable in $R$. This will restrict the class of
primes to be considered when dealing with polarizations.  

\begin{lemma} \label{oneshadow}
Let $I$ be a square-free monomial ideal and let $P$ be a minimal prime
of $\widetilde{I^t}$ in $S_t$ for some $t$, and suppose 
$x_{(i,j)} \in P$. Then $x_{(i,k)} \not\in P$ for all $k \not= j$.  
\end{lemma}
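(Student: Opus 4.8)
The plan is to use the standard description of the minimal primes of a monomial ideal together with the ``order-filling'' property of polarized monomials that is recorded immediately before the statement. Recall that, since $\widetilde{I^t}$ is a monomial ideal, its minimal primes are precisely the monomial primes $P$ of $S_t$ (i.e.\ generated by a subset of the variables) that are minimal among all monomial primes containing $\widetilde{I^t}$; moreover, for a monomial $M$ and such a $P$, one has $M \in P$ if and only if some variable generating $P$ divides $M$.

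Suppose, for contradiction, that $x_{(i,j)} \in P$ and $x_{(i,k)} \in P$ with $j \neq k$; after relabeling we may assume $j < k$. Let $P'$ be the monomial prime of $S_t$ generated by all the variable generators of $P$ except $x_{(i,k)}$, so $P' \subsetneq P$. I claim $\widetilde{I^t} \subseteq P'$, which contradicts the minimality of $P$ and proves the lemma. To verify the claim it suffices to show that every minimal generator $M$ of $\widetilde{I^t}$ lies in $P'$. Since $\widetilde{I^t} \subseteq P$, we have $M \in P$, so some variable $v$ generating $P$ divides $M$. If $v \neq x_{(i,k)}$, then $v$ is one of the generators of $P'$, so $M \in P'$. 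If $v = x_{(i,k)}$, then $x_{(i,k)} \mid M$; because $M$ is a minimal generator of $\widetilde{I^t}$ and $j < k$, the order-filling property of polarization noted above (that $x_{(i,k)} \mid M$ forces $x_{(i,\ell)} \mid M$ for all $1 \le \ell \le k$) gives $x_{(i,j)} \mid M$, and since $x_{(i,j)}$ generates $P'$ we again get $M \in P'$. Hence $\widetilde{I^t} \subseteq P' \subsetneq P$, contradicting minimality.

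There is no serious obstacle here: the entire content is the elementary observation that, in any prime containing $\widetilde{I^t}$, the higher shadow $x_{(i,k)}$ is redundant, since every polarized minimal generator divisible by $x_{(i,k)}$ is already divisible by the lower shadow $x_{(i,j)}$. The only point requiring a little care is to run the argument over the \emph{minimal} generators of $\widetilde{I^t}$, where the order-filling property of polarization is available, rather than over arbitrary elements of the ideal.
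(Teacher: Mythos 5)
Your proof is correct and is essentially the same argument as the paper's, just phrased in terms of monomial primes rather than vertex covers: both remove the higher shadow from $P$ and use the order-filling property of polarization to show the result is still a (vertex) cover of $\widetilde{I^t}$, contradicting minimality.
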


\begin{proof} Let $\H_t$ be the hypergraph associated to
  $\widetilde{I^t}$. Then $P$ is a minimal vertex cover of
  $\H_t$. Suppose by contradiction that $x_{(i,j)}$ and $x_{(i,k)}$
  are both in $P$ and $k \not= j$. Without loss of generality, assume
  $k < j$. Let $v$ be a minimal generator of $\widetilde{I^t}$ that is
  covered by $x_{(i,j)}$. From our observation above, $v$ is divisible
  by $x_{(i,l)}$ for all $l \le j$. In particular, $v$ is divisible by
  $x_{(i,k)}$. Thus, $P \setminus \{x_{(i,j)} \}$ is a vertex cover of
  $\H_t$. This is a contradiction to the minimality of $P$. The lemma
  is proved. 
\end{proof}

\begin{remark} \label{coverlift}
Observe that every minimal prime of $I$ lifts to a minimal prime of the
polarization $\widetilde{I^t}$ of $I^t$ for every $t$. Indeed, if
$(x_1, \dots, x_r)$ is a minimal prime of $I$, then $(x_1, \dots,
x_r)$ is a minimal prime of $I^t$. This implies that $\{ x_{(1,1)},
\ldots , x_{(r,1)}\} $ is a vertex cover for the hypergraph associated to $\widetilde{I^t}$. This cover is necessarily minimal. In other words, $(x_{(1,1)}, \dots, x_{(r,1)})$ is a minimal prime of $\widetilde{I^t}$. 
\end{remark}



In the next lemma, we rephrase a result of Faridi \cite{Fa1} which shows that the embedded primes of $I^t$ also lift to associated primes of $\widetilde{I^t}$, and that associated primes of
the $\widetilde{I^t}$ depolarize to associated primes of $I^t$. This
creates a correspondence, which is not usually one-to-one, between
associated primes of $I^t$ and associated primes of its polarization
$\widetilde{I^t}$. Notice that $\widetilde{I^t}$ is a square-free
monomial ideal, and so all associated primes are minimal. Thus
statement $(1)$ below is actually a statement about all associated
primes of the polarization.

\begin{lemma} \label{primesPolarize} Let $I$ be a square-free monomial
  ideal, and let $t$ be a positive integer. 
\begin{enumerate}
\item Let $P\in \Min (R/\widetilde{I^t})$, and let $p$ be the
  depolarization of $P$. 
Then $p \in \Ass (R/I^t)$. 
\item Let $q \in \Ass(R/I^t)$. Then, there is at least one prime $Q
  \in \Ass (R/\widetilde{I^t})$ 
such that the depolarization of $Q$ is $q$.
\end{enumerate}
\end{lemma}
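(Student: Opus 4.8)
The statement is a repackaging of a theorem of Faridi~\cite{Fa1}, and the plan is to reconstruct it from the mechanics of polarization together with Lemma~\ref{oneshadow}. Throughout, write $\varphi\colon S_t\to R$ for the depolarization homomorphism $x_{(i,j)}\mapsto x_i$; it is surjective, sends monomials to monomials, satisfies $\varphi(\widetilde{I^t})=I^t$, and for a monomial $v\in R$ whose exponents are at most $t$ one has $v\in I^t$ if and only if $\widetilde v\in\widetilde{I^t}$. I shall also use that every minimal generator of $\widetilde{I^t}$ involves an initial segment $x_{(i,1)},\dots,x_{(i,\ell)}$ of the shadows of each variable $x_i$ it uses (the remark after the definition of polarization), and that by Lemma~\ref{oneshadow} a prime of $\Min(S_t/\widetilde{I^t})$ meets each shadow class in at most one variable, so that its depolarization is again a monomial prime.

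For part~(2), suppose $q=(I^t:c)$ with $c$ a monomial. First normalize $c$: whenever $x_i^{t+1}\mid c$ replace $c$ by $c/x_i$; since $I$ is square-free, a product of $t$ generators of $I$ contains $x_i$ to power at most $t$, so this does not change the colon ideal, and after finitely many steps $c$ has all exponents $\le t$, so $\widetilde c\in S_t$ is defined and $\widetilde c\notin\widetilde{I^t}$ because $c\notin I^t$. Put $A=(\widetilde{I^t}:\widetilde c)\subsetneq S_t$. For each $x_i\in q$ the exponent $a_i$ of $x_i$ in $c$ is at most $t-1$ (otherwise the same square-freeness argument would put $c$ itself into $I^t$), so $\widetilde{x_ic}=x_{(i,a_i+1)}\widetilde c$ lies in $\widetilde{I^t}$, hence $x_{(i,a_i+1)}\in A$. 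These shadows depolarize onto the generators of $q$, so $\varphi(A)\supseteq q$; and $\varphi(A)\subseteq q$ because $m\in A$ gives $\varphi(m)\,c=\varphi(m\widetilde c)\in I^t$. With $q$ prime these two containments force $\varphi(\sqrt A)=q$. Now write $\sqrt A=\bigcap_{Q\in\Min(S_t/A)}Q$: if every $Q$ had $\varphi(Q)\supsetneq q$ we could pick, for each $Q$, a shadow $x_{(i_Q,k_Q)}\in Q$ with $x_{i_Q}\notin q$, and then $\prod_Q x_{(i_Q,k_Q)}\in\sqrt A$ would depolarize to a monomial supported off $q$, contradicting $\varphi(\sqrt A)=q$. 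So some $Q\in\Min(S_t/A)$ depolarizes to exactly $q$; and $Q\in\Ass(S_t/A)\subseteq\Ass(S_t/\widetilde{I^t})=\Min(S_t/\widetilde{I^t})$ by Lemma~\ref{coloninclusion} and the square-freeness of $\widetilde{I^t}$, which is what~(2) asserts.

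For part~(1), let $P\in\Min(S_t/\widetilde{I^t})$ and $p=\varphi(P)=(x_i:i\in T)$, where $T$ indexes the shadow classes met by $P$. If $P$ uses only first shadows, $P=(x_{(i,1)}:i\in T)$, then the fact that $P$ covers the hypergraph of $\widetilde{I^t}$ forces every minimal generator of $I^t$ to be divisible by some $x_i$ with $i\in T$, so $p\supseteq\sqrt{I^t}=I$, and the minimality of $P$ precludes a prime of $I$ lying strictly between $I$ and $p$, so $p\in\Min(R/I)\subseteq\Ass(R/I^t)$ --- this is Remark~\ref{coverlift} read in reverse. In general $P$ involves non-initial shadows, and the plan is to take a square-free witness $c$ for $P$ in $S_t$ (for example the product of all shadow variables not in $P$, which works by the usual description of the minimal primes of a square-free monomial ideal) and to verify that $(I^t:\varphi(c))=p$, the inclusion $\supseteq$ again coming from depolarizing the relations $x_{(i,j_i)}\,c\in\widetilde{I^t}$, where $x_{(i,j_i)}$ is the unique shadow of $x_i$ in $P$. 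I expect the reverse inclusion to be the main obstacle: depolarizing and then repolarizing is not the identity --- a witness $c$ need not involve only initial segments of shadows, although generators of $\widetilde{I^t}$ do --- so one must argue carefully that $\varphi(c)\notin I^t$ and that $(I^t:\varphi(c))$ acquires no variable outside $p$. This is exactly where the $I^t\leftrightarrow\widetilde{I^t}$ correspondence stops being one-to-one, and where I would fall back on the analysis of~\cite{Fa1}.
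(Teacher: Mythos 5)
The paper's ``proof'' of this lemma is a one-line citation to Corollary~2.6 of Faridi~\cite{Fa1}, so your attempt to reconstruct it from first principles is a genuinely different route. Your treatment of part~(2) is a real argument and I believe it is correct: normalizing the witness $c$ so all exponents are at most $t$ (valid because products of $t$ square-free generators carry each $x_i$ to a power at most $t$), forming $A=(\widetilde{I^t}:\widetilde c)$, noting $\Ass(S_t/A)\subseteq\Min(S_t/\widetilde{I^t})$ via Lemma~\ref{coloninclusion}, and using the product trick on $\sqrt A=\bigcap Q$ to produce a $Q$ with $\varphi(Q)=q$ is a clean, self-contained proof of~(2) that the paper does not supply. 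This is a nice addition.

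Part~(1), however, is not proved: you propose to take the canonical witness $c=\prod_{x_{(i,j)}\notin P}x_{(i,j)}$ and show $(I^t:\varphi(c))=p$, and you correctly flag that $\varphi(c)\notin I^t$ is the sticking point, but that worry is fatal rather than cosmetic. The paper's own example already kills it: with $I=(xy,yz,xz)$, $t=2$, and $P=(x_2,y_1)\in\Min(S_2/\widetilde{I^2})$, the canonical witness is $c=x_1y_2z_1z_2$, so $\varphi(c)=xyz^2$. But $xyz^2$ is a minimal generator of $I^2$, so $(I^2:\varphi(c))=R$, not $p=(x,y)$. The failure mode is exactly the phenomenon you describe: $P$ contains the non-initial shadow $x_2$, so $c$ omits $x_2$ but retains $x_1$, yet $\varphi$ cannot see the difference and $\varphi(c)$ gains a full set of $x$-powers. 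A correct proof of~(1) must choose the witness differently --- in this example the right colon element is $xz^2$, whose polarization $x_1z_1z_2$ does give $(\widetilde{I^2}:x_1z_1z_2)=(x_2,y_1)$ --- and your outline gives no mechanism for producing such a $c$. As written, part~(1) is reduced back to Faridi's result, which is what the paper does; so the gap is genuine, though you were candid about it.
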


\begin{proof} See Corollary 2.6 of \cite{Fa1}.
\end{proof}

\begin{example} Notice that the correspondence between $q \in \Ass(R/I^t)$ and $Q \in \Ass(R/\widetilde{I^t})$ in Lemma \ref{primesPolarize} is not one-to-one. For example, let $R=k[x,y,z]$ and let $I=(xy,yz,xz)$ be the edge
ideal of a triangle. Then $I^2=(x^2y^2, y^2z^2,x^2z^2, xy^2z, xyz^2,
x^2yz)$ and 
$$\widetilde{I^2}=(x_1x_2y_1y_2, y_1y_2z_1z_2, x_1x_2z_1z_2,
x_1y_1y_2z_1, x_1y_1z_1z_2, x_1x_2y_1z_1).$$
The associated primes of
$R/I^2$ are $\{ x,y\}, \{x,z\}, \{y,z\}, \{x,y,z\}$ and the associated
primes of $S/\widetilde{I^2}$ are 
$$\begin{array}{ccccl} \{x_1,y_1\}, & \{x_2,y_1\}, & \{x_1, y_2\},
& \{x_1,z_1\}, & \{x_2, z_1\}, \\ \{y_1,z_1\}, & \{y_2,z_1\}, & \{x_1,z_2\},
& \{y_1,z_2\}, & \{x_2, y_2, z_2\}. \end{array}$$
\end{example}

We are now ready to prove our main result in this
section. Notice that the ideals in this theorem are minimally non-packing.

\begin{theorem} \label{thm.embeddedpower}
Let $I \subseteq R = k[x_1, \dots, x_d]$ be a square-free monomial ideal such that every minor of $I$ is normally torsion-free. Suppose that $I$ does not have the packing property. Then $\m \in \Ass (R/I^{\beta_1(I) +1})$ and $\beta_1(I)+1$ is the least power $t$ such that $\m \in \Ass (R/I^t)$.
\end{theorem}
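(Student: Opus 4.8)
The plan is to prove the two assertions of Theorem~\ref{thm.embeddedpower} in the opposite order from how they are stated: first establish that $\m$ cannot be associated to $I^t$ for any $t \le \beta_1(I)$, and then produce a witness showing $\m \in \Ass(R/I^{\beta_1(I)+1})$. The lower bound is immediate: since every minor of $I$ is normally torsion-free, in particular every \emph{proper} minor is, so Corollary~\ref{notbeforeB} applies and gives that $\m \in \Ass(R/I^t)$ forces $t \ge \beta_1(I)+1$. As in the proof of Theorem~\ref{sequences}, I may first reduce (via Lemmas~\ref{addvariable} and~\ref{disconnected}) to the case where $I$ has no isolated vertices, and I note that the hypothesis on minors means $I$ is minimally non-packing (every proper minor satisfies König, hence has the packing property, but $I$ itself fails König since it fails packing). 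Write $\beta_1 = \beta_1(I)$ and $\alpha_0 = \alpha_0(I) = \height I$; since König fails, $\alpha_0 > \beta_1$.

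For the main claim I would pass to the polarization $\widetilde{I^{\beta_1+1}}$ in $S_{\beta_1+1}$ and use Lemma~\ref{primesPolarize}(1): it suffices to exhibit a minimal prime $P$ of $\widetilde{I^{\beta_1+1}}$ whose depolarization is all of $\m = (x_1,\dots,x_d)$, i.e.\ a minimal vertex cover of the hypergraph $\H_{\beta_1+1}$ of $\widetilde{I^{\beta_1+1}}$ that uses at least one shadow of \emph{every} variable $x_i$. The candidate cover should be built from a minimal vertex cover $C$ of $\H(I)$ itself together with a maximal matching: fix a minimal vertex cover $C = \{x_{i_1},\dots,x_{i_{\alpha_0}}\}$ of $\H(I)$ with $\alpha_0 = |C| > \beta_1$, and fix a maximal independent set $\{E_1,\dots,E_{\beta_1}\}$ of generators. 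A monomial generator of $I^{\beta_1+1}$ is a product $x^{F_0}x^{F_1}\cdots x^{F_{\beta_1}}$ of $\beta_1+1$ edges of $\H(I)$; covering it in the polarized hypergraph requires, for some variable $x_j$ appearing in the product with multiplicity $m_j$, choosing one of its first $m_j$ shadows. The idea is that in any product of $\beta_1+1$ edges of $\H(I)$, since the matching number is $\beta_1$, the edges cannot be pairwise disjoint, so some variable appears with multiplicity $\ge 2$; this is exactly what forces a genuinely "second-level" cover and is the source of the embedded prime. Concretely I would take $P$ to be generated by $x_{(i,1)}$ for every vertex $x_i$ in the support of $I$ (or a carefully trimmed version thereof using $C$ for most vertices and a second shadow $x_{(i,2)}$ of one vertex lying in the matching), check it is a vertex cover of $\H_{\beta_1+1}$, and then pare it down to a minimal one while arguing that minimality cannot remove a shadow of any $x_i$.

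The delicate part, and the main obstacle, is showing that the minimal cover one lands on after this paring-down process really does retain a shadow of \emph{every} variable — equivalently, that the depolarization is $\m$ and not some smaller monomial prime. This is where the failure of the packing property (not merely the failure of normal torsion-freeness) must be used decisively: one needs that $I$ fails König, i.e.\ $\alpha_0 > \beta_1$, to guarantee that $\beta_1+1$ edges always overlap, and more subtly one needs the \emph{minimality} of $I$ as a non-packing ideal to control what the minimal covers of $\H_{\beta_1+1}$ look like. I expect to argue by contradiction: if the minimal cover $P$ misses every shadow of some variable $x_\ell$, then by depolarizing and using Lemma~\ref{primesPolarize}(1) and Remark~\ref{coverlift} one produces an associated prime of $I^{\beta_1+1}$ not containing $x_\ell$; localizing at that prime (inverting $x_\ell$, i.e.\ passing to the contraction $\H(I)/x_\ell$, a proper minor) and invoking that this minor is normally torsion-free would then contradict the existence of an embedded prime there — forcing $x_\ell$ into $P$ after all. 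Once $P$ is shown to meet a shadow of each $x_i$, Lemma~\ref{primesPolarize}(1) gives $\m \in \Ass(R/I^{\beta_1+1})$, and combined with the first paragraph this shows $\beta_1+1$ is exactly the least such power, completing the proof and establishing sharpness of Corollary~\ref{notbeforeB}.
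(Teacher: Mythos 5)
Your high-level plan is right (handle the lower bound via Corollary~\ref{notbeforeB}, then exhibit a minimal prime of the polarization $\widetilde{I^{\beta_1+1}}$ depolarizing to $\m$ and invoke Lemma~\ref{primesPolarize}(1)), and you correctly identify the central combinatorial fact: any product of $\beta_1+1$ edges must involve a repeated variable. But the concrete construction you propose, and the paring-down argument meant to rescue it, both have genuine gaps.

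The candidate cover you start from, namely $\{x_{(i,1)} : x_i \in V(\H)\}$, is a vertex cover of $\H_{\beta_1+1}$ for trivial reasons, but when you pare it to a minimal cover there is nothing forcing the result to involve a shadow of every variable. Indeed, for any minimal vertex cover $C$ of $\H(I)$, the set $\{x_{(j,1)} : x_j \in C\}$ is already a vertex cover of $\H_{\beta_1+1}$ (each generator of $I^{\beta_1+1}$ has a factor from each edge and $C$ meets each edge), and it depolarizes to a minimal prime of $I$, not to $\m$. So the paring process may simply land you on a lift of a minimal prime, which Remark~\ref{coverlift} guarantees always exists. Your proposed contradiction does not rule this out: if the minimal cover $P$ misses all shadows of $x_\ell$, depolarizing and localizing only tells you the depolarization $p$ is a minimal prime of $I$ --- a perfectly consistent outcome, not a contradiction. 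Your alternative trimmed candidate (``$C$ for most vertices plus one second shadow'') fares no better, since its depolarization is $(C) + (x_i) \neq \m$ in general.

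The move you are missing is to start from the \emph{second} shadows $A = \{x_{(1,2)}, \dots, x_{(d,2)}\}$. Your observation that $\beta_1+1$ edges must overlap is precisely what makes $A$ a cover: $\prod_i x_i \in I^{\beta_1} \setminus I^{\beta_1+1}$, hence no minimal generator of $I^{\beta_1+1}$ is square-free, hence every polarized generator is divisible by some $x_{(i,2)}$. Crucially, $A$ depolarizes to $\m$ by construction, so the only thing left to verify is \emph{minimality} of $A$ --- and this is where the failure of K\"onig ($\alpha_0 > \beta_1$) and the normal torsion-freeness of proper minors enter. The paper's argument: a proper subcover $B \subsetneq A$ generates a prime that depolarizes (by Lemma~\ref{primesPolarize}) to a minimal prime $q = (x_1,\dots,x_s)$ of $I$ with $s \geq \alpha_0 \geq \beta_1+1$; choosing for each $x_i$ a generator $g_i$ whose only vertex in the cover is $x_i$, one checks $x_j^2 \nmid \prod_{i=1}^{\beta_1+1} g_i$ for all $j \leq s$, so its polarization escapes $B$ --- contradiction. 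Without this explicit choice of starting cover and witnessing monomial, the argument does not close.
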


\begin{proof} The second statement follows from Corollary \ref{notbeforeB}. We shall prove the first statement. For simplicity of notation, let $\beta_1 = \beta_1(I)$. By definition, it can be seen that $\prod_{i=1}^d x_i \in I^{\beta_1} \setminus I^{\beta_1 +1}$. Thus no minimal generator of $I^{\beta_1 +1}$ is square-free. This implies
that $A = \{x_{(1,2)}, x_{(2,2)}, \dots, x_{(d,2)}\}$ is a vertex cover of the
associated hypergraph $\H'$ of $\widetilde{I^{\beta_1 +1}}$. 

We claim that $A$ is a minimal vertex cover of $\H'$. Suppose by contradiction that $A$ is not. Then there is a subset $B$ of $A$ that is a minimal vertex cover of $\H'$. Let $Q$ be the prime ideal generated by elements in $B$. By Lemma \ref{primesPolarize}, $Q$ depolarizes to an associated prime $q$ of $I^t$. Since every minor of $I$ is normally torsion-free, a
localization argument shows that the only possible embedded prime of $I^t$ is
the maximal homogeneous ideal $\m$ of $R$. This implies that $Q$ depolarizes to a minimal
prime of $I^t$. That is, $q$ is a minimal prime of $I^t$, and so $q$ is a minimal prime of $I$. By reindexing the variables in $R$ if necessary, we may assume that $q=(x_1,\dots, x_s)$. Then $C = \{x_1, \dots, x_s\}$ is a minimal vertex cover of the hypergraph $\H$ associated to $I$. By definition, for each $x_i \in C$, there exists a monomial generator $g_i$ of $I$ such that $g_i$ is not covered by $C \setminus \{x_i\}$. 
It follows from our hypothesis that every minor of $I$ has the packing property by \cite[Corollary 3.14]{GVV} and \cite[page 3]{Cornuejols}. Thus, our hypothesis implies that $I$ does not have the K\"onig property. That is, $\alpha_0(I) > \beta_1(I)$. As observed before, $\alpha_0(I) = \height I$, so $s \ge \alpha_0(I) \ge \beta_1 + 1$. Observe now that for any $j = 1, \dots, s$, $x_j^2$ does not divide $M=\prod_{i=1}^{\beta_1+1} g_i$. This implies that 
the polarization $\widetilde{M}$ in $\widetilde{I^{\beta_1+1}}$ is not covered
by $(x_{(1,2)}, \dots , x_{(s,2)})$, a contradiction to the fact that $B$ is a vertex cover of $\H'$. 

We have shown that $A$ is a minimal vertex cover of the hypergraph $\H'$ associated to $\widetilde{I^{\beta_1+1}}$. Let $P$ be the ideal generated by elements in $A$. Then $P$ is a minimal prime of $\widetilde{I^{\beta_1+1}}$. It then follows from Lemma \ref{primesPolarize} that $\m$, which is the depolarization of $P$, is an associated prime of $I^{\beta_1+1}$. The result is proved. \end{proof}

\begin{remark} Theorem \ref{thm.embeddedpower} shows that the bound in Corollary \ref{notbeforeB} is sharp.
\end{remark}

Notice that the above remark, if combined with a proof of \cite[Conjecture 1.6]{Cornuejols}, would provide a practical bound for how many
powers one needs to check to guarantee that any particular square-free
monomial ideal is normally torsion-free. For any ideal $I$ of $R$, the set of
associated primes of $R/I^n$ stabilize for large $n$ (cf. \cite{brod}),
and so there exists a finite integer $N$ such that if
$I^n=I^{(n)}$ for all $n \leq N$, then $I^n=I^{(n)}$ for all $n$. 
For classes of non-monomial ideals, the minimal such $N$ has been
studied and is often of the order of the dimension $d$ of $R$ (cf.
\cite{morey}). For square-free monomial ideals, Theorem
\ref{thm.embeddedpower} indicates that the bound on $N$ should
actually be $\beta_* +1$ where $\beta_* =  {\mbox {\rm max}} \{ \beta_1(I')
\, | \, I' \, {\mbox {\rm is a minor of }} I\}$. 

\begin{remark} \label{rmk.highestpower}
Suppose that \cite[Conjecture 1.6]{Cornuejols} is true. If $I$ is any square-free
monomial ideal and $I^n=I^{(n)}$ for all $n\leq
N=\lceil {\frac{d+1}{2}}\rceil$, then $I^n=I^{(n)}$ for all $n$.
\end{remark}

\begin{proof}
Suppose $I^t$ has an embedded prime for some $t$. Let $P$ be an
embedded prime of $I^t$ such that $P$ does not contain any other
embedded primes. Then $PR_P$ is the maximal ideal of $R_P$ and is the
unique embedded prime of $(I_P)^t$. Let $\H$ be the hypergraph associated
to $I_P$. By Lemma \ref{disconnected}, we can assume that $\H$ does not have
isolated vertices. Since $I_P$ is not normally torsion-free and we
have assumed \cite[Conjecture 1.6]{Cornuejols} holds, we can assume $I$ does not have the
packing property. If $I$ is minimally non-packing, by Theorem
\ref{thm.embeddedpower}, $PR_P$ is associated to $(I_P)^{\beta_1(I)+1}$
and thus $P$ is associated to $I^{\beta_1(I)+1}$. Since each edge of
$\H$ contains at least two vertices and the number of vertices of $\H$
is at most $d$, we have $\beta_1(I)+1 \leq N$. If
$I$ is not minimally non-packing, we can pass to a minor $I'$ of
$I$ that is minimally non-packing, and again assume that $I'$ does
not have isolated vertices. By Theorem \ref{thm.embeddedpower}, the power
$(I')^{\beta_1(I')+1}$ has an embedded prime. As before,
$\beta_1(I')+1 \leq N$, and by Lemma \ref{addvariable} and the fact
that associated primes localize, we again get that $I^{\beta_1(I')+1}$
also has an embedded prime. Thus in any case, if $I^t$ has an embedded
prime for some $t$, then $I^n$ has an embedded prime for some $n\leq
N$, as desired.
\end{proof} 

The bound in Remark \ref{rmk.highestpower} is sharp. For example, edge ideals of odd cycles achieve this bound (see \cite{AJ}). We conjecture that the bound in Remark \ref{rmk.highestpower} is true regardless of the validity of \cite[Conjecture 1.6]{Cornuejols}. This would allow one to check the MFMC property, and therefore, the packing property of a hypergraph $\H$, by computing the associated primes of a uniformly bounded number of powers of $I(\H)$.


\begin{thebibliography}{99}


\bibitem{brod} M. Brodmann. {\it Asymptotic Stability of $\Ass (M/I^nM)$}. Proc. Amer. Math. Soc. {\bf 74} (1979), 16--18.

\bibitem{AJ} J. Chen, S. Morey, A. Sung. {\it The Stable Set of Associated Primes of the Ideal of a Graph}. Rocky Mountain J. Math. {\bf 32} (2002), 71--89. 

\bibitem{Berge}C. Berge. {\it Hypergraphs}. Mathematical Library {\bf 45}. North-Holland, Amsterdam, 1989.

\bibitem{CC} M. Conforti, G. Cornu\'{e}jols. {\it Clutters that Pack and the Max Flow Min Cut Property: A Conjecture}. In the fourth Bellairs Workshop on Combinatorial Optimization, W.R. Pulleyblank and F.B. Shepherd, eds. 1993. 

\bibitem{Cornuejols} G. Cornu\'ejols. {\it Combinatorial Optimization: Packing and Covering}. CMBS-NSF Regional Conference Series in  Applied Mathematics {\bf 74}, SIAM 2001. 

\bibitem{Diadic} G. Cornu\'ejols, B. Guenin, F. Margot. {\it The packing property}. Math. Program. {\bf 89} (2000), Ser. A, 113--126.

  
\bibitem{Fa} S. Faridi. {\it The facet ideal of a simplicial complex}.
Manuscripta Math. {\bf 109} (2002) 159--174.

\bibitem{Fa1} S. Faridi. {\it Monomial ideals via square-free monomial ideals}. 
Lecture Notes in Pure and Applied Mathematics. {\bf 244} (2005), 85--114.

\bibitem{FH} C.A. Francisco and H.T. H\`a. {\it Whiskers and sequentially Cohen-Macaulay graphs}. Journal of Combinatorial Theory Series A, {\bf 115} (2008), no. 2, 304--316.

\bibitem{GRV} I. Gitler, E. Reyes, R.H. Villarreal. {\it Blowup
  Algebras of Square-free Monomial Ideals and Some Links to
  Combinatorial Optimization Problems}. Rocky Mountain
  J. Math. {\bf 39} (2009), 71--102.

\bibitem{GVV} I. Gitler, C. Valencia, R.H. Villarreal. {\it A Note on
  Rees Algebras and the MFMC Property}. Beitrage Algebra
  Geom. {\bf 48} (2007), No. 1, 141--150.
  
\bibitem{HMV} H.T. H\`a, S. Morey, and R.H. Villarreal. {\it Cohen-Macaulay admissible clutters}. To appear in J. Commutative Algebra. {\tt arXiv:0803.1332}.
  
\bibitem{HVT1} H.T. H\`a and A. Van Tuyl. {\it Resolution of square-free monomial ideals via facet ideals: a survey}. Contemporary Mathematics, {\bf 448} (2007), 91--117.

\bibitem{HVT} H.T. H\`a and A. Van Tuyl. {\it Monomial ideals, edge
  ideals of hypergraphs, and their graded Betti numbers}, J. Algebraic
  Combinatorics {\bf 27} (2008), 215--245. 

\bibitem{Trung} J. Herzog, T. Hibi, N.V. Trung. {\it Vertex cover algebras of unimodular hypergraphs}. Proc. Amer. Math. Soc., {\bf 137} (2009), 409--414.

\bibitem{HHTZ} J. Herzog, T. Hibi, N.V. Trung, X. Zheng. {\it Standard Graded Vertex Cover Algebras, Cycles and Leaves}. Trans. Amer. Math. Soc., {\bf 360} (2008) 6231--6249.

\bibitem{Mat} H. Matsumura. {\it Commutative Ring Theory}, Cambridge Stud. Adv. Math. {\bf 8}, Canbridge Univ. Press, Cambridge, 1986.  
  
\bibitem{MS} E. Miller, B. Sturmfels. {\it Combinatorial Commutative Algebra.} Springer GTM 227, Springer, 2004.

\bibitem{morey} S. Morey. {\it Stability of Associated primes and
  Equality of Ordinary and Symbolic Powers of Ideals}, Communications
  in Algebra {\bf 27} (1999), 3221-3231.

\bibitem{Schrijver} A. Schrijver, {\it Combinatorial Oprimization,
  Polyhedra and Efficiency} in: Algorighms and Combinatorics, {\bf
  24}, Springer-Verlag, Berlin, 2003. 

\bibitem{SVV} A. Simis, W.V. Vasconcelos, R.H. Villarreal. {\it On the
  Ideal Theory of Graphs}, J. Algebra {\bf 167}, 389--416,
  1994. 
  
\bibitem{V1} R. H. Villarreal. {\it Monomial algebras.} Monographs
  and Textbooks  in Pure and Applied Mathematics, {\bf 238}. Marcel
  Dekker, Inc., New York, 2001. 

\end{thebibliography}
\end{document}